\newtheorem{theorem}{Theorem}
\newlist{lemmalist}{enumerate}{1}
\setlist[lemmalist]{label=(\roman{lemmalisti}),
	ref=\thelemma:$(\roman{lemmalisti})$,
	noitemsep}
\declaretheorem[
name=Lemma]{lemma}
\Crefname{lemmalist}{Lemma}{Lemmas}
\newtheorem{corollary}{Corollary}
\newenvironment{proof2}[1][]%
{\noindent{\setcounter{equation}{0}\it Proof.
}{#1}{}}{$\diamond$\vspace{0ex}}
\title{{\bf There are finitely many $5$-vertex-critical $(P_6,\text{bull})$-free graphs}}
\author[a,b]{Yiao Ju}
\author[c]{Jorik Jooken}
\author[c,d]{Jan Goedgebeur}
\author[e]{Shenwei Huang\thanks{Email: shenweihuang@nankai.edu.cn.}}
\affil[a]{College of Computer Science, Nankai University, Tianjin 300071, China}
\affil[b]{Tianjin Key Laboratory of Network and Data Security Technology, Nankai University, Tianjin 300071, China}
\affil[c]{Department of Computer Science, KU Leuven Campus Kulak-Kortrijk, 8500 Kortrijk, Belgium}
\affil[d]{Department of Applied Mathematics, Computer Science and Statistics, Ghent University, 9000 Ghent, Belgium}
\affil[e]{School of Mathematical Sciences and LPMC, Nankai University, Tianjin 300071, China}
\begin{document}

\maketitle

\begin{abstract}
In this paper, we are interested in $4$-colouring algorithms for graphs that do not contain an induced path on $6$ vertices nor an induced bull, i.e., the graph with vertex set $\{v_1,v_2,v_3,v_4,v_5\}$ and edge set $\{v_1v_2,v_2v_3,v_3v_4,v_2v_5,v_3v_5\}$. Such graphs are referred to as $(P_6,\text{bull})$-free graphs. A graph $G$ is \emph{$k$-vertex-critical} if $\chi(G)=k$, and every proper induced subgraph $H$ of $G$ has $\chi(H)<k$. In the current paper, we investigate the structure of $5$-vertex-critical $(P_6,\text{bull})$-free graphs and show that there are only finitely many such graphs, thereby answering a question of Maffray and Pastor. A direct corollary of this is that there exists a polynomial-time algorithm to decide if a $(P_6,\text{bull})$-free graph is $4$-colourable such that this algorithm can also provide a certificate that can be verified in polynomial time and serves as a proof of 4-colourability or non-4-colourability.

{\bf Keywords.} Graph colouring; Critical graphs; Strong perfect graph theorem;  Hereditary graph classes; Polynomial-time algorithms

\end{abstract}

\section{Introduction}

 All graphs in this paper are finite and simple. We follow~\cite{BM08} for general graph theory terminology that is not defined here. A \emph{$k$-colouring} of a graph $G$ is a function $\phi:V(G)\rightarrow\mathcal{C}$, where $\mathcal{C}$ is a set of $k$ colours, such that each pair of adjacent vertices $u,v\in V(G)$ has $\phi(u)\neq\phi(v)$. A graph $G$ is \emph{$k$-colourable} if $G$ admits a $k$-colouring. The \emph {chromatic number} of $G$, denoted by $\chi(G)$, is the minimum number $k$ such that $G$ is $k$-colourable. A graph $G$ is \emph{$k$-chromatic} if $\chi(G)=k$.

For any fixed $k$, the $k$-colouring problem asks whether an input graph $G$ is $k$-colourable. It is one of the most important and widely investigated problems in graph theory, both from an algorithmic as well as a combinatorial point of view. The problem can be solved in polynomial time for $k \leq 2$, but it is one of the prototypical NP-complete problems for $k \geq 3$. The complexity situation can be very different when considering restrictions on the input graphs. One such restriction that attracted considerable attention in the literature on $k$-colouring is to forbid a small set of graphs from occurring as induced subgraphs of the input graph.  We refer the interested reader to~\cite{GJPS17} for a survey of this rich and active research area. For two graphs $G$ and $H$, $G$ is \emph{$H$-free} if $G$ has no induced subgraphs that are isomorphic to $H$. For a family $\mathcal{H}$ of graphs, $G$ is \emph{$\mathcal{H}$-free} if $G$ is $H$-free for every $H\in\mathcal{H}$. We often write $(H_1,\ldots,H_s)$-free for $\{H_1,\ldots,H_s\}$-free. Note that for any family of graphs $\mathcal{H}$, the class of $\mathcal{H}$-free graphs is \emph{hereditary}, i.e., it is closed under vertex deletion. 

Let $P_n$, $C_n$ and $K_n$ denote the path, cycle and complete graph on $n$ vertices, respectively. The \emph{bull} is the graph with vertex set $\{v_1,v_2,v_3,v_4,v_5\}$ and edge set $\{v_1v_2,v_2v_3,v_3v_4,v_2v_5,v_3v_5\}$, the \emph{banner} is the graph with vertex set $\{v_1,v_2,v_3,v_4,v_5\}$ and edge set $\{v_1v_2,v_2v_3,v_3v_4,v_1v_4,v_1v_5\}$ and the \emph{chair} is the graph with vertex set $\{v_1,v_2,v_3,v_4,v_5\}$ and edge set $\{v_1v_2,v_2v_3,v_1v_4,v_1v_5\}$ (see~\cref{fig:P6BullBannerChair}). For an integer $k \geq 1$ and a graph $H$ the graph $kH$ denotes the disjoint union of $k$ copies of the graph $H$. The complement of a graph $G$ is denoted by $\overline{G}$. 

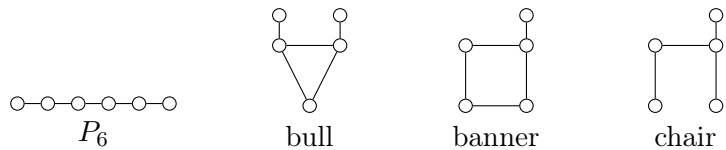
\begin{figure}[h!]
\centering
\begin{tikzpicture}[scale=0.4]
\tikzstyle{vertex}=[circle, draw, fill=white, inner sep=1pt, minimum size=5pt]

	\node at (2.5,-1) {$P_6$};
	\node[vertex](1) at (0,0) {};
	\node[vertex](2) at (1,0) {};
	\node[vertex](3) at (2,0) {};
   	\node[vertex](4) at (3,0) {};
	\node[vertex](5) at (4,0) {};
	\node[vertex](6) at (5,0) {};

    \foreach \from/\to in {1/2,2/3,3/4,4/5,5/6}
		\draw (\from) -- (\to);
\end{tikzpicture}
\hspace{10mm}
\begin{tikzpicture}[scale=0.4]
\tikzstyle{vertex}=[circle, draw, fill=white, inner sep=1pt, minimum size=5pt]

	\node at (0,-1) {bull};
	\node[vertex](1) at (0,0) {};
	\node[vertex](2) at (1,2) {};
	\node[vertex](3) at (-1,2) {};
   	\node[vertex](4) at (1,3) {};
	\node[vertex](5) at (-1,3) {};

    \foreach \from/\to in {1/2,1/3,2/3,2/4,3/5}
		\draw (\from) -- (\to);
\end{tikzpicture}
\hspace{10mm}
\begin{tikzpicture}[scale=0.4]
\tikzstyle{vertex}=[circle, draw, fill=white, inner sep=1pt, minimum size=5pt]

	\node at (0,-1) {banner};
	\node[vertex](1) at (-1,0) {};
	\node[vertex](2) at (1,2) {};
	\node[vertex](3) at (-1,2) {};
   	\node[vertex](4) at (1,3) {};
	\node[vertex](5) at (1,0) {};

    \foreach \from/\to in {1/5,1/3,2/3,2/5,2/4}
		\draw (\from) -- (\to);
\end{tikzpicture}
\hspace{10mm}
\begin{tikzpicture}[scale=0.4]
\tikzstyle{vertex}=[circle, draw, fill=white, inner sep=1pt, minimum size=5pt]

	\node at (0,-1) {chair};
	\node[vertex](1) at (-1,0) {};
	\node[vertex](2) at (1,2) {};
	\node[vertex](3) at (-1,2) {};
   	\node[vertex](4) at (1,3) {};
	\node[vertex](5) at (1,0) {};

    \foreach \from/\to in {1/3,2/3,2/5,2/4}
		\draw (\from) -- (\to);
\end{tikzpicture}
\caption{The graphs $P_6$, bull, banner and chair.}\label{fig:P6BullBannerChair}
\end{figure}

The fourth author proved in~\cite{H16} that the 4-colouring problem remains NP-complete for $P_7$-free graphs, but is polynomial-time solvable for $(P_6,\text{banner})$-free graphs. Moreover, he conjectured that the 4-colouring problem can be solved in polynomial time for $P_6$-free graphs. This conjecture was further supported by Chudnovsky et al.~\cite{CMSZ17}, Brause et al.~\cite{BSHVK15} and Maffray and Pastor~\cite{MP17} who showed that the problem can be solved in polynomial time for $(P_6,C_5)$-free, $(P_6,\text{chair})$-free and $(P_6,\text{bull})$-free graphs, respectively. The conjecture was finally proven by Chudnovsky, Spirkl and Zhong in 2019~\cite{CSZ19}.

The previously discussed papers provide algorithms that can produce a 4-colouring if it exists, but are otherwise only able to report that it does not exist without being able to show a small \textit{certificate} as a proof thereof, i.e., these algorithms can provide positive certificates, but not negative certificates. We call an algorithm for a decision problem \textit{certifying} if together with its answer it outputs a certificate that allows one to verify in polynomial time that the output of the algorithm is indeed correct (see~\cite{MMNS11} for a more thorough treatment of this subject). In other words, the algorithms from the previous paragraph are not certifying algorithms. Such certifying algorithms are important to verify that an algorithm really produced the right result and was not corrupted by an implementation bug or a hardware failure. This consideration naturally leads to the following notion: a graph $G$ is \emph{$k$-vertex-critical} if $\chi(G)=k$, and every proper induced subgraph $H$ of $G$ has $\chi(H)<k$. Note that a graph $G$ is not $k$-colourable if and only if $G$ contains a $(k+1)$-vertex-critical graph as an induced subgraph. Hence, such an induced subgraph can serve as a negative certificate. In case this negative certificate comes from a finite list, one can indeed verify in polynomial time whether it occurs as an induced subgraph of the input graph.

Ho{\`a}ng et al.~\cite{HMRSV15} showed that there are infinitely many $5$-vertex-critical $P_5$-free graphs (note that these graphs are of course also $P_6$-free). Maffray and Pastor stated in their paper~\cite{MP17} that they do not know whether there are only finitely many $5$-vertex-critical $(P_6,\text{bull})$-free graphs and called this ``an interesting question''. In the current paper, we show that there are indeed only finitely many such graphs. This has the following algorithmic consequence.

\begin{corollary}
There exists a polynomial-time certifying algorithm to determine whether a $(P_6,\text{bull})$-free graph is $4$-colourable.
\end{corollary}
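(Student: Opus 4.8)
The plan is to combine the paper's finiteness result with the standard fact, already recorded in the introduction, that a graph fails to be $4$-colourable precisely when it contains an induced $5$-vertex-critical subgraph. Let $\mathcal{L}$ denote the finite set of all $5$-vertex-critical $(P_6,\text{bull})$-free graphs, whose finiteness is guaranteed by the main theorem, and let $c$ be the largest number of vertices of any member of $\mathcal{L}$. Both $|\mathcal{L}|$ and $c$ are absolute constants, so $\mathcal{L}$ can be hard-coded into the algorithm.

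Given a $(P_6,\text{bull})$-free input graph $G$ on $n$ vertices, the algorithm first searches for an induced copy of any member of $\mathcal{L}$: for each $H\in\mathcal{L}$ it enumerates all $\binom{n}{|V(H)|}$ vertex subsets of size $|V(H)|$ and tests each for being isomorphic to $H$. Since $|V(H)|\le c$ and $|\mathcal{L}|$ is constant, this takes $O(n^c)$ time, which is polynomial. If such a copy on a vertex set $S$ is found, the algorithm reports that $G$ is not $4$-colourable and outputs $S$ as a negative certificate; otherwise it invokes the polynomial-time $4$-colouring algorithm of Maffray and Pastor~\cite{MP17} (or of Chudnovsky, Spirkl and Zhong~\cite{CSZ19}) to produce an explicit $4$-colouring, which it outputs as a positive certificate.

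For correctness I would argue both directions using the equivalence from the introduction. If $G$ is not $4$-colourable, then it contains an induced $5$-vertex-critical subgraph $H$; because $(P_6,\text{bull})$-freeness is hereditary, $H$ is itself $(P_6,\text{bull})$-free, hence $H\in\mathcal{L}$, so the search succeeds. Conversely, if $G$ is $4$-colourable then it contains no induced $5$-vertex-critical subgraph at all, so in particular no member of $\mathcal{L}$, the search fails, and the colouring subroutine returns a valid $4$-colouring. The certificates are verifiable in polynomial time: a positive certificate is checked by inspecting every edge of $G$, while a negative certificate $S$ is checked by confirming $G[S]\cong H$ for the appropriate $H\in\mathcal{L}$ (a constant-size test) and recalling that every graph in $\mathcal{L}$ is $5$-chromatic, which certifies $\chi(G)\ge 5>4$.

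There is essentially no hard step once the main theorem is in hand; the corollary is a direct consequence. The only points requiring care are (i) ensuring that the negative certificate is drawn from the \emph{finite} list, which is exactly what the main theorem supplies and what makes the induced-subgraph search polynomial, and (ii) pairing the membership test with an off-the-shelf colouring algorithm so that yes-instances also receive a short, verifiable witness. The genuine mathematical difficulty lies entirely in proving that $\mathcal{L}$ is finite, not in deriving this algorithmic corollary from it.
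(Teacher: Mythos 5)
Your proposal is correct and follows essentially the same route as the paper's proof: both combine the finiteness of the list $\mathcal{L}$ (giving a constant-size bound $c$ that makes the induced-subgraph search run in time $O(n^c)$) with the Maffray--Pastor polynomial-time colouring algorithm for the positive certificate, and both invoke the standard equivalence between non-$4$-colourability and containing a $5$-vertex-critical induced subgraph, which lies in $\mathcal{L}$ by heredity of $(P_6,\text{bull})$-freeness. The only difference is cosmetic---you search for a critical subgraph first and colour second, while the paper does the reverse---and your version is equally valid since a failed search certifies $4$-colourability.
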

\begin{proof}
Let $G$ be the $(P_6,\text{bull})$-free graph from the input and $\mathcal{L}$ be the finite list of $5$-vertex-critical $(P_6,bull)$-free graphs. The certifying algorithm will first run the polynomial-time algorithm from Maffray and Pastor~\cite{MP17} and output \lq $4$-colourable\rq~together with a 4-colouring in case $G$ is $4$-colourable. Otherwise, the algorithm will iterate over each $5$-vertex-critical $(P_6,\text{bull})$-free graph $G' \in \mathcal{L}$ and output \lq not $4$-colourable\rq~together with $G'$ in case $G'$ occurs as an induced subgraph of $G$. Note that $\mathcal{L}$ is finite and therefore there exists a constant $c$ such that $|V(G')| \leq c$ for each $G' \in \mathcal{L}$ and therefore one can check whether $G'$ occurs as an induced subgraph of $G$ in polynomial time.
\end{proof}

The rest of this paper is structured as follows: in~\cref{sec:pre} we introduce some additional notation and preliminary lemmas that will be used later. In~\cref{sec:algo} we discuss an algorithm that will be used to prove several claims about the structure of $5$-vertex-critical $(P_6,\text{bull})$-free graphs with the aid of a computer. Finally in~\cref{sec:proofs} we present the proof of the main theorem of this paper, stating that there are finitely many $5$-vertex-critical $(P_6,\text{bull})$-free graphs.

\section{Preliminaries}\label{sec:pre}

 The \emph{clique number} of $G$, denoted by $\omega(G)$, is the largest number $k$ such that $G$ contains an induced $K_k$. A graph $G$ is $\emph perfect$ if every induced subgraph $H$ of $G$ has $\chi(H)=\omega(H)$. Obviously, $K_k$ is the only perfect $k$-vertex-critical graph. Let $G$ be a graph and $X,Y\subseteq V(G)$. We write $G[X]$ for the subgraph of $G$ induced by $X$. We say that $X$ induces an $H$ if $G[X]$ is isomorphic to $H$. We say that $A \subseteq X$ is a \textit{component} of $X$ if $A$ induces a connected component of $G[X]$. The \textit{neighbourhood} of $X$, denoted by $N(X)$, is the set of vertices in $V(G)\setminus X$ that have a neighbour in $X$. If $X=\{u\}$, we simply write $N(u)$ instead of $N(\{u\})$. Let $N_Y(X)=N(X)\cap Y$. $X$ is $\emph complete$ ($\emph anti$-$\emph complete$) to $Y$, if every vertex in $X$ is adjacent (nonadjacent, resp.) to every vertex in $Y$. $X$ is $\emph pure$ to $Y$ if $X$ is complete or anti-complete to $Y$, and $\emph mixed$ on $Y$ otherwise. $X$ is a $\emph homogeneous$ set of $G$ if every vertex in $V(G)\setminus X$ is pure to $X$. A homogeneous set $X$ of $G$ is \emph{nontrivial} if $2\leq|X|\leq|V(G)|-1$. Two vertices $u,v \in V(G)$ are \emph{comparable} if $u \neq v$ and $N(u) \subseteq N(v)$ or $N(v) \subseteq N(u)$. We use the following preliminaries.

\begin{lemma}[\cite{XJGH25}]\label{lem:homo-critical}
Let $G$ be a $k$-vertex-critical graph, and $S$ be a homogeneous set in $G$. Then $S$ is $\chi(S)$-vertex-critical.
\end{lemma}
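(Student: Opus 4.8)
The plan is to set $t = \chi(G[S])$ and verify the two defining conditions of $t$-vertex-criticality for the induced subgraph $G[S]$. The first, $\chi(G[S]) = t$, holds by definition, so the entire content lies in the second: for every $v \in S$ we must show $\chi(G[S]-v) < t$. Since deleting a single vertex can lower the chromatic number by at most one, this is equivalent to ruling out $\chi(G[S]-v) = t$, and I would argue by contradiction: assume some $v \in S$ satisfies $\chi(G[S]-v) = t$, and derive that $\chi(G) \le k-1$, contradicting $\chi(G) = k$.

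The next step is to exploit homogeneity. Write $N = N(S)$ for the set of vertices outside $S$ having a neighbour in $S$; by definition of a homogeneous set, every vertex of $N$ is complete to $S$, while every vertex of $V(G) \setminus (S \cup N)$ is anti-complete to $S$. Thus the only edges leaving $S$ run to $N$, and in particular both $v$ and every vertex of $S \setminus \{v\}$ is adjacent to all of $N$. Because $G$ is $k$-vertex-critical we have $\chi(G-v) = k-1$, so I would fix a $(k-1)$-colouring $\phi$ of $G - v$ with colour set $\{1, \dots, k-1\}$.

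The crucial counting uses that $S \setminus \{v\}$ is complete to $N$, so the colour sets $\phi(N)$ and $\phi(S \setminus \{v\})$ are disjoint subsets of $\{1, \dots, k-1\}$. Under the standing assumption, the restriction of $\phi$ to $S \setminus \{v\}$ is a proper colouring of $G[S]-v$ and therefore uses at least $\chi(G[S]-v) = t$ colours, giving $|\phi(N)| + t \le k-1$. Consequently at least $(k-1) - |\phi(N)| \ge t$ colours of $\{1, \dots, k-1\}$ are avoided by $\phi$ on $N$.

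Finally, I would recolour $S$ in one shot: take a proper $t$-colouring of $G[S]$ using $t$ of these free colours and assign it to $S$ (this simultaneously recolours $S \setminus \{v\}$ and colours $v$), leaving $\phi$ unchanged everywhere else. The result is proper, since it is proper within $S$ by construction, proper between $S$ and $N$ because the colours placed on $S$ avoid $\phi(N)$, and there are no other edges incident to $S$. This yields a $(k-1)$-colouring of all of $G$, contradicting $\chi(G) = k$. I expect the main conceptual hurdle to be recognising that one should recolour the entire module $S$ at once rather than merely trying to extend $\phi$ to the single vertex $v$; once that idea is in place, the disjointness of $\phi(N)$ and $\phi(S \setminus \{v\})$ and the resulting colour budget make the remainder routine.
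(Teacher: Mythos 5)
Your proof is correct and complete: the disjointness of $\phi(N(S))$ and $\phi(S\setminus\{v\})$ in any $(k-1)$-colouring $\phi$ of $G-v$, the resulting budget of at least $\chi(G[S])$ colours unused on $N(S)$, and the one-shot recolouring of the whole module $S$ with those free colours together produce a $(k-1)$-colouring of $G$, giving the desired contradiction (and your implicit reduction of criticality to vertex deletions is the standard, valid equivalence). Note that the paper itself gives no proof of this lemma but imports it from \cite{XJGH25}, so there is no in-paper argument to compare against; what you wrote is the standard proof of this fact.
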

%\begin{proof}
%Let $\Phi$ be the set of all $(k-1)$-colouring of $G-S$, and let $\phi$ be a colouring in $\Phi$ such that $N_G(S)$ uses the minimum number (say $t$) of colours. 
%If $t\leq k-\chi(S)-1$, then we may give $G$ a $(k-1)$-colouring obtained from $\phi$ by colouring $S$ with the $\chi(S)$ colours not used in $N_G(S)$, a contradiction. So $t\geq k-\chi(S)$.
%Let $v\in S$, and let $\psi$ be a $(k-1)$-colouring of $G-v$. Since $N_G(S)$ uses at least $k-\chi(S)$ colours, $S-v$ uses at most $\chi(S)-1$ colours  in $\psi$. So $\chi(S-v)<\chi(S)$.
%\end{proof}

\begin{lemma}[\cite{BM08}]\label{lem:critical}
If $G$ is a $k$-vertex-critical graph, then $G$ is connected, has no clique cutsets or comparable vertices.
\end{lemma}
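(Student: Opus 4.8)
The plan is to establish each of the three assertions separately by contradiction, in every case exploiting the defining feature of a $k$-vertex-critical graph: every proper induced subgraph admits a proper $(k-1)$-colouring, whereas $G$ itself does not. The uniform strategy is to assume the structure in question (a disconnection, a clique cutset, or a comparable pair), and then to assemble a proper $(k-1)$-colouring of all of $G$ out of $(k-1)$-colourings of suitably chosen proper induced subgraphs, contradicting $\chi(G)=k$.

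For connectedness, I would suppose $G$ has components $C_1,\dots,C_m$ with $m\ge 2$. Since $\chi(G)=\max_i\chi(C_i)=k$, some component $C_j$ is $k$-chromatic; but $C_j$ is a proper induced subgraph, so $\chi(C_j)\le k-1$, a contradiction. For the absence of comparable vertices, suppose $u\ne v$ with, without loss of generality, $N(u)\subseteq N(v)$. Note this forces $u$ and $v$ to be nonadjacent, since otherwise $v\in N(u)\subseteq N(v)$, which is impossible in a loopless graph. I would then take a $(k-1)$-colouring $\phi$ of the proper induced subgraph $G-u$ and extend it by setting $\phi(u):=\phi(v)$. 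Every neighbour of $u$ lies in $N(v)$ and hence avoids the colour $\phi(v)$, and $u$ is not adjacent to $v$; thus $\phi$ remains proper and $(k-1)$-colours all of $G$, a contradiction.

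The main work is the clique-cutset case, and this is where I expect the only genuine subtlety. Suppose $K$ is a clique cutset, so that $G-K$ is disconnected; I would partition $V(G)\setminus K$ into nonempty sets $A$ and $B$ with no edges between them (for instance, one component of $G-K$ and the union of the rest). Both $G[A\cup K]$ and $G[B\cup K]$ are proper induced subgraphs, so each has a $(k-1)$-colouring, say $\phi_A$ and $\phi_B$. The key point is that because $K$ is a clique, its vertices receive $|K|$ pairwise distinct colours under each of these colourings; hence there is a permutation of the colour set carrying $\phi_B$ restricted to $K$ onto $\phi_A$ restricted to $K$. After applying this permutation to $\phi_B$, the two colourings agree on $K$, and since $A$ is anticomplete to $B$ the combined assignment is a proper $(k-1)$-colouring of $G$, yielding the final contradiction. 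Making this colour-alignment step precise is the crux of the argument; the remaining two assertions are immediate once the correct proper induced subgraphs have been chosen.
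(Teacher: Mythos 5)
Your proof is correct. The paper does not prove this lemma at all --- it is quoted directly from the textbook~\cite{BM08} --- and your three arguments (taking a $k$-chromatic component for connectedness, the colour-permutation gluing of $(k-1)$-colourings of $G[A\cup K]$ and $G[B\cup K]$ across the clique cutset $K$, and recolouring $u$ with $\phi(v)$ for comparable vertices, together with the observation that $N(u)\subseteq N(v)$ forces $uv\notin E(G)$ under the open-neighbourhood definition used here) are exactly the standard ones.
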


We can strengthen the lemma that says that $k$-vertex-critical graphs have no comparable vertices as follows.

\begin{lemma}[\cite{CGHS21}]\label{lem:comparable}
Let $G$ be a $k$-vertex-critical graph. Then $G$ has no two nonempty disjoint subsets $X$ and $Y$ of $V(G)$ that satisfy all the following conditions.

$\bullet$ $X$ is anti-complete to $Y$.

$\bullet$ $\chi(X)\leq \chi(Y)$.

$\bullet$ $Y$ is complete to $N(X)$.
\end{lemma}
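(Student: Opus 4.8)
The plan is to argue by contradiction. Suppose nonempty disjoint sets $X,Y$ satisfying all three conditions exist; I will build a proper $(k-1)$-colouring of $G$, contradicting $\chi(G)=k$. Since $Y$ is nonempty and disjoint from $X$, we have $X\neq V(G)$, so $G-X$ is a proper induced subgraph of the $k$-vertex-critical graph $G$ and therefore admits a colouring $\phi$ using the colour set $\{1,\dots,k-1\}$. The whole task reduces to extending $\phi$ to $X$ without introducing a $k$-th colour.

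The central observation is that, under $\phi$, the set of colours appearing on $N(X)$ is disjoint from the set of colours appearing on $Y$. Write $C_{N(X)}=\phi(N(X))$ and $C_Y=\phi(Y)$; both are defined since $N(X),Y\subseteq V(G)\setminus X$. If some colour lay in $C_{N(X)}\cap C_Y$, it would occur on a vertex $z\in N(X)$ and on a vertex $y\in Y$; but $Y$ is complete to $N(X)$, so $y$ and $z$ are adjacent (and distinct, since a vertex is not adjacent to itself), forcing $\phi(y)\neq\phi(z)$ --- a contradiction. Hence $C_{N(X)}\cap C_Y=\emptyset$.

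Next I would count colours. Restricting the proper colouring $\phi$ to $G[Y]$ shows $|C_Y|\geq\chi(Y)$, and by hypothesis $\chi(Y)\geq\chi(X)$, so $|C_Y|\geq\chi(X)$. I can therefore choose $\chi(X)$ colours from $C_Y$ and use them to properly colour $G[X]$, which is possible exactly because $G[X]$ needs only $\chi(X)$ colours. To verify that the extended map is proper on all of $G$, it suffices to check the edges between $X$ and $V(G)\setminus X$: every such edge has its endpoint outside $X$ in $N(X)$ by definition, the colours I placed on $X$ all lie in $C_Y$, and $C_Y\cap C_{N(X)}=\emptyset$, so no monochromatic edge is created. (The anti-completeness of $X$ to $Y$ is precisely what guarantees that $Y$ contributes no neighbour of $X$, so the colours of $Y$ are safe to reuse on $X$.) The result is a proper $(k-1)$-colouring of $G$, the desired contradiction.

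I do not expect a serious obstacle, since the argument is essentially a colour-counting extension. The only point needing care is to keep the two relations straight: the pair $(N(X),Y)$ must receive \emph{disjoint} colours, which is forced by the completeness condition, whereas the pair $(X,Y)$ may \emph{share} colours freely, which is permitted by the anti-completeness condition. Conflating these would break the argument, so I would be explicit about which hypothesis supplies which property.
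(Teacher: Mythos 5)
Your proof is correct, and it is essentially the same colour-exchange argument as the one given for this lemma in the cited source \cite{CGHS21} (the paper itself states the lemma without proof): take a $(k-1)$-colouring of $G-X$, observe that completeness of $Y$ to $N(X)$ forces the colours on $Y$ and on $N(X)$ to be disjoint, and then reuse the colours of $Y$ (at least $\chi(Y)\geq\chi(X)$ of them) on $X$. Your care in separating which hypothesis yields disjointness of colour sets (completeness to $N(X)$) versus which permits colour reuse (anti-completeness to $Y$) is exactly the right bookkeeping.
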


We will also make use of the following theorems.

\begin{theorem}[Strong Perfect Graph Theorem~\cite{CRST06}]\label{thm:SPGT}
A graph $G$ is perfect if and only if neither $G$ nor $\overline{G}$ contains an induced odd cycle on 5 or more vertices.
\end{theorem}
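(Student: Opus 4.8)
The plan is to separate the two implications. The ``only if'' direction is elementary: I would show that an odd hole and an odd antihole are both imperfect, and since perfection is hereditary their presence as induced subgraphs rules out perfection. For an induced odd cycle $C_{2k+1}$ with $k\ge 2$ we have $\omega=2$ (it is triangle-free) but $\chi=3$, so $\chi>\omega$. Passing to the complement, $\omega(\overline{C_{2k+1}})=\alpha(C_{2k+1})=k$, whereas every independent set of $\overline{C_{2k+1}}$ is a clique of $C_{2k+1}$ and hence has at most two vertices, forcing $\chi(\overline{C_{2k+1}})\ge\lceil(2k+1)/2\rceil=k+1>\omega$. Thus if $G$ or $\overline{G}$ contains such a cycle, $G$ is not perfect.

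The substance is the ``if'' direction: every \emph{Berge graph} (one with no induced odd hole and no induced odd antihole) is perfect. I would argue by taking a minimal imperfect Berge graph $G$ and deriving a contradiction. The backbone of the argument is a structural decomposition theorem: every Berge graph is either \emph{basic} or admits one of a prescribed list of ``non-trivial'' decompositions. Here the basic class consists of bipartite graphs, line graphs of bipartite graphs, their complements, and double split graphs, while the admissible decompositions are a $2$-join, a $2$-join of the complement, and a balanced skew partition.

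Granting the decomposition theorem, the contradiction is obtained in two independent steps. First, one checks directly that each basic class is perfect (bipartite graphs and line graphs of bipartite graphs are classically perfect, double split graphs can be verified by hand, and perfection is closed under complementation by the Weak Perfect Graph Theorem), so a minimal imperfect $G$ cannot be basic. Second, one shows that a minimal imperfect graph cannot admit any of the three decompositions: each decomposition splits $G$ into strictly smaller pieces whose perfection---guaranteed by minimality after suitable block constructions---can be recombined into a perfect colouring of $G$; for the balanced skew partition this leans on Lov\'asz's substitution/replication framework together with the ``balanced'' hypothesis. Since $G$ is then neither basic nor decomposable, it cannot exist, which proves the theorem.

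The hard part, by an enormous margin, is establishing the decomposition theorem itself. This is the heart of the Chudnovsky--Robertson--Seymour--Thomas proof and proceeds by a lengthy case analysis driven by the local structure a Berge graph can contain---most notably its \emph{prisms}, \emph{wheels}, and induced subdivisions of small graphs. One shows that the presence of any such configuration forces one of the decompositions, while a Berge graph free of all of them must be basic. I would expect essentially all of the difficulty, and the entire length of the argument, to concentrate in this structural analysis; the colouring and recombination steps sketched above are comparatively routine once the decomposition is in hand.
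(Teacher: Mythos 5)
This statement is not proved in the paper at all: it is the Strong Perfect Graph Theorem, imported verbatim from Chudnovsky--Robertson--Seymour--Thomas~\cite{CRST06} and used as a black box (to conclude that a $(K_5,\overline{C_9})$-free, $P_6$-free, non-perfect graph contains a $C_5$ or $\overline{C_7}$). So there is no ``paper's own proof'' to compare against, and no referee would expect one; citing it is the correct move.

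Judged on its own terms, your attempt is an accurate roadmap of the actual CRST proof but not a proof. The ``only if'' direction you give is complete and correct: $\chi(C_{2k+1})=3>2=\omega(C_{2k+1})$ and $\chi(\overline{C_{2k+1}})\geq k+1>k=\omega(\overline{C_{2k+1}})$, and perfection is hereditary by definition. For the ``if'' direction, however, everything rests on the clause ``granting the decomposition theorem'': that every Berge graph is basic or admits a $2$-join, a complement $2$-join, or a balanced skew partition is precisely the content of the roughly $150$-page argument in~\cite{CRST06}, and you also still owe the (nontrivial) facts that a minimum imperfect graph admits none of these decompositions --- the $2$-join case is a separate theorem of Cornu\'ejols and Cunningham, and the balanced-skew-partition case is itself a substantial part of~\cite{CRST06}; your phrase ``recombined into a perfect colouring'' papers over this. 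Two smaller points: the decomposition theorem as stated in~\cite{CRST06} also allows a homogeneous pair (M-join), a case only later shown to be removable, so the list you state is the refined, not the original, form; and your appeal to the Weak Perfect Graph Theorem for closure under complementation is fine but is an additional imported result. In short: correct skeleton, correct attributions of difficulty, but the theorem's substance is assumed rather than proved --- which, for this particular statement, is exactly what the paper itself does by citing it.
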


\begin{theorem}[\cite{CGSZ20}]\label{thm:4-critical-P6}
There are exactly 80 4-vertex-critical $P_6$-free graphs.
\end{theorem}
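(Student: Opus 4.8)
This is a known result of~\cite{CGSZ20}; I would establish it by combining the structural tools recalled above with an exhaustive computer search. First I would dispose of the perfect case: as already noted, $K_4$ is the unique perfect $4$-vertex-critical graph, and it is $P_6$-free, so it contributes one graph. For any \emph{non-perfect} $4$-vertex-critical $P_6$-free graph $G$, the Strong Perfect Graph Theorem (\cref{thm:SPGT}) guarantees that $G$ or $\overline{G}$ contains an induced odd cycle of length at least $5$. Since $G$ is $P_6$-free it contains no induced $C_n$ with $n\geq 7$ (any six consecutive vertices would induce a $P_6$), so the only odd hole in $G$ can be a $C_5$. An odd antihole $\overline{C_{2k+1}}$ has independence number $2$ and is therefore automatically $P_6$-free, but $\chi(\overline{C_{2k+1}})=\lceil(2k+1)/2\rceil$, which exceeds $4$ once $2k+1\geq 9$; as an induced subgraph of the $4$-chromatic graph $G$ is at most $4$-chromatic, the only new antihole that can occur is $\overline{C_7}$. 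Hence every graph I must enumerate contains an induced $C_5$ or an induced $\overline{C_7}$.

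Next I would fix such a copy $H\in\{C_5,\overline{C_7}\}$ and analyse how the vertices of $V(G)\setminus V(H)$ attach to it, grouping the outside vertices according to their neighbourhood pattern on $V(H)$. The criticality hypotheses prune the admissible patterns sharply: $G$ is connected with no clique cutset and no comparable vertices (\cref{lem:critical}); more strongly, $G$ admits no pair of nonempty disjoint sets $(X,Y)$ with $X$ anti-complete to $Y$, $\chi(X)\leq\chi(Y)$, and $Y$ complete to $N(X)$ (\cref{lem:comparable}); and every homogeneous set $S$ induces a $\chi(S)$-vertex-critical graph (\cref{lem:homo-critical}). Since $G$ is $4$-vertex-critical, any nontrivial homogeneous set $S$ has $\chi(S)\leq 3$, so by \cref{lem:homo-critical} it must be a $K_2$, a $K_3$, or a $C_5$; this, together with the exclusion of comparable vertices and of the pairs forbidden by \cref{lem:comparable}, severely limits the sizes and mutual adjacencies of the attachment classes. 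Iterating these reductions confines the configurations around $H$ to a finite, enumerable collection.

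The crux is to convert these local restrictions into a global bound on $|V(G)|$: one must show that no attachment class around $H$ can grow without eventually creating an induced $P_6$, a pair of comparable vertices, or a forbidden pair as in \cref{lem:comparable}. This is the main obstacle, since many attachment classes interact at once and $P_6$-freeness, the clique-cutset-free condition, and all three criticality reductions must be respected simultaneously; it is here that a computer-assisted case analysis does the heavy lifting. Once an explicit upper bound on the order is in hand, finiteness is immediate, and the exact count is obtained by an orderly exhaustive generation of all $4$-vertex-critical $P_6$-free graphs up to that bound, building candidates incrementally and discarding any that violate $P_6$-freeness or criticality, which returns precisely the claimed $80$ graphs.
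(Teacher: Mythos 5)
This statement is not proved in the paper at all: it is an external result imported verbatim from~\cite{CGSZ20} and used as a black box (e.g.\ in step (1) of the proof of \cref{thm:no-homo-C5-finite}). So there is no internal proof to compare your attempt against; the only honest comparison is with what a genuine proof of the theorem would require.

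Measured against that, your proposal has a genuine gap. Your preliminary reductions are fine and essentially correct: $K_4$ is the unique perfect $4$-vertex-critical graph; $P_6$-freeness kills all odd holes except $C_5$; and an odd antihole $\overline{C_{2k+1}}$ with $2k+1\geq 9$ has chromatic number at least $5$ and so cannot occur, leaving only $\overline{C_7}$. Likewise, your observation that any nontrivial homogeneous set must induce $K_2$, $K_3$ or $C_5$ follows correctly from \cref{lem:homo-critical} and $P_6$-freeness. But the entire content of the theorem lies in the two steps you defer: (i) converting the local restrictions on attachment classes around the $C_5$ (or $\overline{C_7}$) into a global bound on $|V(G)|$, which you yourself call ``the main obstacle'' and resolve only by appeal to an unspecified ``computer-assisted case analysis''; and (ii) the exact count of $80$, which you assert will fall out of an ``orderly exhaustive generation'' that is never described or executed. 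For a theorem whose statement is an exact enumeration, asserting that a search ``returns precisely the claimed $80$ graphs'' is not a proof but a restatement of the claim. Note also a methodological subtlety: an exhaustive generation algorithm of the kind described in \cref{sec:algo} does not need a pre-established vertex bound --- its termination itself certifies finiteness --- but then the burden shifts entirely to proving (or at least exhibiting) termination, which your sketch also does not do. What~\cite{CGSZ20} actually supplies, and what your outline omits, is precisely this heavy combination of detailed structural lemmas and a completed, verifiable computation.
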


Let $G_1$ and $G_2$ be graphs with disjoint vertex sets, and $S$ be a homogeneous set of $G_1$. We say that $G$ is obtained from $G_1$ by $\emph substituting$ $G_2$ for $S$ if:

$\bullet$ $V(G)=(V(G_1)\setminus S)\cup V(G_2)$,

$\bullet$ $u,v\in V(G_1)\setminus S$ are adjacent in $G$ if and only if $u,v$ are adjacent in $G_1$,

$\bullet$ $u,v\in V(G_2)$ are adjacent in $G$ if and only if $u,v$ are adjacent in $G_2$,

$\bullet$ $u\in V(G_1)\setminus S$ and $v\in V(G_2)$ are adjacent in $G$ if and only if $u$ is complete to $S$ in $G_1$.

One can observe that $V(G_2)$ is a homogeneous set of $G$, and $G_1$ is the graph obtained from $G$ by substituting $G_1[S]$ for $V(G_2)$. We now prove the following lemmas related to substituting.

\begin{lemma}\label{lem:substitute-leading}
Let $G_1$ and $G_2$ be graphs with disjoint vertex sets, and $S$ be a homogeneous set of $G_1$ with $\chi(G_2)\leq \chi(G_1[S])$. Let $G$ be the graph obtained from $G_1$ by substituting $G_2$ for $S$, then $\chi(G)\leq \chi(G_1)$.
\end{lemma}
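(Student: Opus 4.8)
The plan is to construct a proper colouring of $G$ directly from an optimal colouring of $G_1$, recolouring the substituted part $V(G_2)$ so that it only ever uses colours that $G_1$ already spends on $S$. First I would fix an optimal proper colouring $\phi_1$ of $G_1$, so that $\phi_1$ uses exactly $\chi(G_1)$ colours, and set $C_S = \{\phi_1(s) : s \in S\}$ to be the set of colours appearing on $S$. Since $\phi_1$ restricted to $S$ is a proper colouring of $G_1[S]$, we have $|C_S| \ge \chi(G_1[S])$, and hence $|C_S| \ge \chi(G_2)$ by the hypothesis of the lemma.

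The second step is to colour $G_2$ inside this palette. Because $G_2$ is $\chi(G_2)$-colourable and $|C_S| \ge \chi(G_2)$, I can choose a proper colouring $\phi_2$ of $G_2$ whose image is contained in $C_S$ (just pick any $\chi(G_2)$ colours of $C_S$ and use a proper $\chi(G_2)$-colouring valued in them). I then define $\phi$ on $V(G)$ by $\phi(v) = \phi_1(v)$ for $v \in V(G_1)\setminus S$ and $\phi(v) = \phi_2(v)$ for $v \in V(G_2)$. Since both $\phi_1$ and $\phi_2$ draw only from the palette of $\phi_1$, the colouring $\phi$ uses at most $\chi(G_1)$ colours, so it remains only to check that $\phi$ is proper.

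The heart of the argument — and the one place where homogeneity is used — is the verification of the edges of $G$ joining $V(G_1)\setminus S$ to $V(G_2)$. Edges inside $V(G_1)\setminus S$ are inherited unchanged from $G_1$ and are properly coloured by $\phi_1$; edges inside $V(G_2)$ are properly coloured by $\phi_2$. For a cross edge, recall that by the definition of substitution a vertex $u \in V(G_1)\setminus S$ is adjacent in $G$ to some (equivalently every) vertex of $V(G_2)$ precisely when $u$ is complete to $S$ in $G_1$. In that case $u$ is adjacent in $G_1$ to every vertex of $S$, so $\phi_1(u)$ differs from every colour of $C_S$; since $\phi_2$ takes values only in $C_S$, we obtain $\phi(u) \neq \phi(v)$ for all $v \in V(G_2)$. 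Hence $\phi$ is proper and uses at most $\chi(G_1)$ colours, giving $\chi(G) \le \chi(G_1)$.

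I expect the only genuine subtlety to be the realisation that the recolouring of $V(G_2)$ must be confined to $C_S$ rather than to the full palette of $\phi_1$: it is exactly this confinement that neutralises the vertices complete to $S$, and it is feasible precisely because of the inequality $\chi(G_2) \le \chi(G_1[S]) \le |C_S|$. Everything else is routine bookkeeping, so once properness is checked the bound on $\chi(G)$ follows at once.
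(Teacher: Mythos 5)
Your proof is correct and follows essentially the same approach as the paper: take an optimal colouring of $G_1$, observe that the colours appearing on $S$ form a palette of size at least $\chi(G_1[S])\geq\chi(G_2)$ that is avoided by every vertex complete to $S$, and recolour $V(G_2)$ within that palette while keeping the colours of $G_1-S$. The only difference is presentational — you spell out the cross-edge verification that the paper compresses into ``these colours do not appear in $N_{G_1}(S)=N_G(G_2)$'' — so there is nothing to change.
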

\begin{proof}
Consider any $\chi(G_1)$-colouring of $G_1$. There are at least $\chi(G_1[S])$ colours used in $S$, and these colours do not appear in $N_{G_1}(S)=N_{G}(G_2)$. Since $\chi(G_2)\leq \chi(G_1[S])$, we may use the colours used in $S$ to colour $G_2$, and let the vertices in $G_1-S$ keep their colours in $G_1$. This gives a $\chi(G_1)$-colouring of $G$.
\end{proof}

\begin{lemma}\label{lem:substitute}
Let $G_1$ be a $p$-vertex-critical graph, $S$ be a homogeneous set of $G_1$ with $\chi(G_1[S])=q$, and $G_2$ be a $q$-vertex-critical graph. Let $G$ be the graph obtained from $G_1$ by substituting $G_2$ for $S$. Then $G$ is $p$-vertex-critical.
\end{lemma}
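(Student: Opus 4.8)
The plan is to prove that $G$ is $p$-vertex-critical by verifying the two defining conditions separately: first that $\chi(G)=p$, and second that every proper induced subgraph of $G$ has chromatic number strictly less than $p$.

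\medskip

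\noindent\textbf{Showing $\chi(G)=p$.} First I would establish $\chi(G)\le p$. Since $G_2$ is $q$-vertex-critical we have $\chi(G_2)=q=\chi(G_1[S])$, so the hypothesis $\chi(G_2)\le \chi(G_1[S])$ of \Cref{lem:substitute-leading} holds, giving $\chi(G)\le \chi(G_1)=p$ immediately. For the reverse inequality $\chi(G)\ge p$, I would argue by contradiction: suppose $G$ admits a $(p-1)$-colouring $\phi$. The idea is to transfer this colouring back to $G_1$. Because $V(G_2)$ is a homogeneous set of $G$ and $G_2$ is $q$-vertex-critical, any colouring of $G$ must use at least $q$ distinct colours on $V(G_2)$ (as $\chi(G_2)=q$). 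I would then recolour $V(G_2)$ using the $q$ colours from a fixed $q$-colouring of $G_1[S]$ in the natural way, and keep the colours of $V(G)\setminus V(G_2)=V(G_1)\setminus S$ unchanged, which yields a proper $(p-1)$-colouring of $G_1$ (using that $G$ and $G_1$ share the same attachment structure to the homogeneous set). This contradicts $\chi(G_1)=p$. Hence $\chi(G)=p$.

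\medskip

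\noindent\textbf{Showing criticality.} It suffices to show that $\chi(G-v)<p$ for every $v\in V(G)$, since removing any one vertex witnesses the same for every proper induced subgraph via monotonicity of $\chi$. I would split into two cases according to whether the deleted vertex lies inside the substituted copy $V(G_2)$ or outside it.

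\medskip

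If $v\in V(G)\setminus V(G_2)=V(G_1)\setminus S$, then $v$ corresponds to a vertex of $G_1$ outside $S$. By $p$-vertex-criticality of $G_1$, the graph $G_1-v$ has a $(p-1)$-colouring $\psi$. Now $S$ is still a homogeneous set of $G_1-v$, and I would apply \Cref{lem:substitute-leading} to $G_1-v$, substituting $G_2$ for $S$: since $\chi(G_2)=q=\chi(G_1[S])\ge \chi((G_1-v)[S])$ and the substitution commutes with deletion of a vertex outside $S$, this gives $\chi(G-v)\le \chi(G_1-v)\le p-1$, as required. If instead $v\in V(G_2)$, then since $G_2$ is $q$-vertex-critical we have $\chi(G_2-v)\le q-1=\chi(G_1[S])-1$. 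Now $G-v$ is exactly the graph obtained from $G_1$ by substituting $G_2-v$ for $S$ (the external attachments are identical). Because $\chi(G_2-v)< \chi(G_1[S])$, the hypothesis of \Cref{lem:substitute-leading} is satisfied with $G_2-v$ in the role of the substituted graph; however, to get a colouring saving one colour I would instead colour $V(G_2-v)$ using only $\chi(G_2-v)\le q-1$ of the $q$ colours appearing on $S$ in a fixed $p$-colouring of $G_1$, and keep the colours on $V(G_1)\setminus S$ unchanged. Since at least one of the $q$ colours on $S$ is now freed up across the entire substituted set, a standard rearrangement yields a proper $(p-1)$-colouring of $G-v$, so $\chi(G-v)\le p-1$.

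\medskip

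\noindent I expect the main obstacle to be the case $v\in V(G_2)$: here one must argue carefully that dropping from $q$ to $q-1$ colours inside the homogeneous block genuinely saves a colour globally, rather than merely matching $\chi(G_1)$. The clean way to organise this is to observe that $G-v$ is a substitution of the $(q{-}1)$-colourable graph $G_2-v$ into the homogeneous set $S$ of the $(p{-}1)$-colourable graph $G_1-s$ for a suitably chosen $s\in S$, or more directly to invoke \Cref{lem:substitute-leading} after noting that $(G_1-s)[S\setminus\{s\}]$ can be coloured with $q-1$ colours; matching the chromatic bookkeeping across the substitution is where care is needed, but no single step is deep.
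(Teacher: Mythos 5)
Your computation of $\chi(G)$ and your treatment of the case $v\in V(G_1)\setminus S$ are correct and coincide with the paper's proof (the paper packages the lower bound $\chi(G)\geq p$ as a second application of \cref{lem:substitute-leading}, using the fact that $G_1$ is obtained from $G$ by substituting $G_1[S]$ for $V(G_2)$; your explicit colour-transfer argument is the same thing unfolded). The genuine gap is in your primary argument for the case $v\in V(G_2)$. Starting from a $p$-colouring of $G_1$ and colouring $V(G_2-v)$ with only $q-1$ of the $q$ colours appearing on $S$ does not save a colour globally: the freed colour is guaranteed to be absent only from $N_{G_1}(S)$, but it may perfectly well occur on vertices of $G_1-S$ that are not adjacent to $S$. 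The colouring you construct is therefore still a $p$-colouring of $G-v$, and no ``standard rearrangement'' is available to eliminate the extra colour; this step, as written, fails.

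The repair you gesture at in your final paragraph is exactly the paper's proof: for any $s\in S$ (no special choice is needed), $G-v$ is the graph obtained from $G_1-s$ by substituting $G_2-v$ for the homogeneous set $S\setminus\{s\}$, and $\chi(G_1-s)=p-1$ because $G_1$ is $p$-vertex-critical, so \cref{lem:substitute-leading} yields $\chi(G-v)\leq p-1$. However, the fact you propose to verify in order to apply that lemma --- that $(G_1-s)[S\setminus\{s\}]$ \emph{can be coloured} with $q-1$ colours --- is the wrong direction and is not what the lemma asks for. Its hypothesis is $\chi(G_2-v)\leq\chi\bigl(G_1[S\setminus\{s\}]\bigr)$, i.e.\ the \emph{lower} bound $\chi\bigl(G_1[S\setminus\{s\}]\bigr)\geq q-1$, which holds automatically since deleting one vertex decreases the chromatic number by at most one. (The paper instead cites \cref{lem:homo-critical} to get the exact value $\chi\bigl(G_1[S\setminus\{s\}]\bigr)=q-1$; the upper bound you state is precisely the half that would require \cref{lem:homo-critical}, and it plays no role in the inequality you actually need.) With the inequality checked in the right direction, your sketch closes and becomes the paper's argument.
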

\begin{proof}
%First we prove that $G$ is $p$-colourable. We may give a $p$-colouring of $G_1$. There are at least $q$ colours used in $S$, and these colours do not appear in $N_{G_1}(S)=N_{G}(G_2)$. Since $\chi(G_2)=q$, we may use the colours used in $S$ to colour $G_2$, and let the vertices in $G_1-S$ keep their colours in $G_1$. This gives a $p$-colouring of $G$.

First we prove that $\chi(G)=p$. Since $G_1$ is the graph obtained from $G$ by substituting $G_1[S]$ for $V(G_2)$, and $\chi(G_1[S])=\chi(G_2)=q$, by using \cref{lem:substitute-leading} twice, we have $\chi(G)\leq \chi(G_1)$ and $\chi(G_1)\leq \chi(G)$. So $\chi(G)=\chi(G_1)=p$.

%Then we prove that for every vertex $v\in V(G)$, $G-v$ is $(p-1)$-colourable. If $v$ is in $G_1-S$, then we may give a $(p-1)$-colouring of $G_1-v$ because $G_1$ is $p$-vertex-critical. By the same argument we can get a $(p-1)$-colouring of $G-v$, since $G-v$ is obtained by substituting $S$ for $G_2$ in the graph $G_1-v$. Now assume that $v$ is in $G_2$. Let $u\in S$. Then $\chi(G_1[S]-u)=q-1$ because of~\cref{lem:homo-critical} and $\chi(G_2-v)=q-1$ because $G_2$ is $q$-vertex-critical. Consider any $(p-1)$-colouring of $G_1-u$. There are at least $q-1$ colours used in $S$, and these colours do not appear in $N_{G_1}(S)=N_{G}(G_2)$. Since $\chi(G_2-v)=q-1$, we may use the colours used in $S\setminus\{u\}$ to colour $G_2-v$, and let the vertices in $G_1-S$ keep their colours in $G_1$. This gives a $(p-1)$-colouring of $G-v$.

Then we prove that for every vertex $v\in V(G)$, $G-v$ is $(p-1)$-colourable. If $v$ is in $G_1-S$, then $G-v$ is the graph obtained from $G_1-v$ by substituting $G_2$ for $S$. By \cref{lem:substitute-leading} we have $\chi(G-v)\leq\chi(G_1-v)$. And we have $\chi(G_1-v)=p-1$ because $G_1$ is $p$-vertex-critical. If $v$ is in $G_2$, then $\chi(G_2-v)=q-1$ because $G_2$ is $q$-vertex-critical. Let $u\in S$, then $\chi(G_1[S\setminus\{u\}])=q-1$ by \cref{lem:homo-critical}. Since $G-v$ is the graph obtained from $G_1-u$ by substituting $G_2-v$ for $S\setminus\{u\}$, and $\chi(G_1[S\setminus\{u\}])=\chi(G_2-v)$, by \cref{lem:substitute-leading} we have $\chi(G-v)\leq \chi(G_1-u)=p-1$.
\end{proof}

\section{Generation algorithm}\label{sec:algo}
In a series of papers~\cite{GJORS24,GS18,HMRSV15,XJGH25,XJGH24}, an algorithm was developed, fine-tuned and generalised to recursively generate all $k$-vertex-critical $\mathcal{H}$-free graphs that contain the graph $I$ as an induced subgraph (for input parameters $k$, $\mathcal{H}$ and $I$). Hence, if the algorithm terminates, this serves as a computational proof that there are only finitely many $k$-vertex-critical $\mathcal{H}$-free graphs containing $I$ as an induced subgraph (since the algorithm is exhaustive). On the other hand, if the algorithm does not terminate there could either be finitely or infinitely many such graphs. When the algorithm is executed with $k=5$, $\mathcal{H}=\{P_6,\text{bull}\}$ and $I=K_1$, the algorithm does not terminate after running for one week and will likely never terminate even though there are only finitely many 5-vertex-critical $(P_6,\text{bull})$-free graphs (as we will show later). Instead, in the current paper we will use this algorithm to prove several claims about the structure of 5-vertex-critical $(P_6,\text{bull})$-free graphs, which will finally help us to prove that there are only finitely many such graphs. We make the source code of the algorithms that were used to verify these claims publicly available at~\cite{jorik-github}. To keep the current paper self-contained, we sketch the main ideas behind this algorithm, but refer the interested reader to~\cite{GS18} for a much more thorough description of the algorithm.

To generate all graphs such that $I$ occurs as an induced subgraph, one can simply start from $I$ and recursively add one vertex and add edges between the new vertex and the old vertices in all possible ways. Here, it is important to notice that multiple ways of adding a set of edges could lead to isomorphic graphs and it is not necessary to recurse for each of these possibilities, since the recursion would again lead to isomorphic copies. Therefore, the algorithm computes the canonical form of its input graph using the \textit{nauty} package~\cite{MP14} and stores this in an appropriate data structure that allows for fast insertion and lookup. This is useful, because two graphs have the same canonical form if and only if these graphs are isomorphic. In case the algorithm finds a graph which is isomorphic with a graph that it considered before, it does not recurse. However, if one is only interested in those graphs which are $k$-vertex-critical and $\mathcal{H}$-free, one can further restrict the possibilities for the edges that are added by employing additional pruning rules. For example, if adding a set of edges leads to a graph that is not $\mathcal{H}$-free or is not $k$-colourable, that graph cannot occur as a proper induced subgraph of any $k$-vertex-critical $\mathcal{H}$-free graph and so the algorithm can first verify whether that graph is an $\mathcal{H}$-free $k$-vertex-critical graph (and output it if so) and then consider the next set of edges to be added without recursing for the current set of edges. However, more sophisticated pruning rules are possible by relying on the structure of $k$-vertex-critical graphs. For example,~\cref{lem:critical} says that a $k$-vertex-critical graph $G$ cannot contain two vertices $u, v$ such that $N(u) \subseteq N(v)$. In particular, this means that if $I$ is an induced subgraph of $G$ such that $N_{V(I)}(u) \subseteq N_{V(I)}(v)$, there must exist a vertex $w \in V(G) \setminus V(I)$ such that $uw \in E(G)$, but $vw \notin E(G)$. Hence, if the current graph considered by the algorithm is $I$, it may add the vertex $w$ and only add those edges between $w$ and $V(I)$ such that $uw \in E(G)$, but $vw \notin E(G)$. The full set of pruning rules used by the algorithm is much larger than the set of pruning rules sketched in this paragraph and we refer the interested reader to~\cite{GS18} for a more complete overview.

\section{Proofs}\label{sec:proofs}

A homogeneous $C_5$ is a homogeneous set that induces a $C_5$. We first prove that there are finitely many 5-vertex-critical ($P_6$,bull)-free graphs with no homogeneous $C_5$ and then use this result to prove that there are finitely many 5-vertex-critical ($P_6$,bull)-free graphs.

\begin{theorem}\label{thm:no-homo-C5-finite}
There are finitely many 5-vertex-critical ($P_6$,bull)-free graphs with no homogeneous $C_5$.
\end{theorem}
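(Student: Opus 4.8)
The plan is to split on whether $G$ is perfect and, if not, on the type of imperfection forced by the Strong Perfect Graph Theorem (\cref{thm:SPGT}). If $G$ is perfect, then since $K_5$ is the only perfect $5$-vertex-critical graph, $G\cong K_5$. Otherwise $G$ or $\overline{G}$ contains an induced odd cycle on at least $5$ vertices. Because $G$ is $P_6$-free it has no induced $C_\ell$ with $\ell\ge 7$, so the only odd hole available in $G$ is $C_5$; and because $\chi(G)=5$, the graph $G$ cannot contain $\overline{C_{2k+1}}$ with $2k+1\ge 11$ (these have chromatic number at least $6$). A short check shows every odd antihole is $(P_6,\text{bull})$-free (in an antihole the complement of any six vertices is a union of paths, never $\overline{P_6}$, and two non-adjacent pendants on a triangle cannot be positioned), so the surviving possibilities are that $G$ contains an induced $\overline{C_7}$ or $\overline{C_9}$. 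Since $\overline{C_9}$ is itself $5$-vertex-critical, criticality forces $G\cong\overline{C_9}$ in that subcase. This reduces the theorem to two cases: $G$ contains an induced $C_5$, or $G$ is $C_5$-free and contains an induced $\overline{C_7}$; I expect the latter to be small enough to finish directly, e.g.\ by running the generation algorithm of \cref{sec:algo} seeded at $\overline{C_7}$ and restricted to $C_5$-free graphs.

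For the main case I would fix an induced $C_5$ on $v_1\cdots v_5$ and partition $V(G)\setminus\{v_1,\dots,v_5\}$ according to the trace $N(u)\cap\{v_1,\dots,v_5\}$ of each outside vertex $u$. First I would use $P_6$-freeness and bull-freeness to discard or heavily restrict the admissible traces: many neighbourhood patterns on a $C_5$ immediately create an induced $P_6$ or an induced bull (a triangle with two pendant vertices, which is easy to locate once a vertex sees a short arc of the $C_5$ together with a suitable second vertex). This should leave only a bounded list of admissible traces, each defining a class of the partition.

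Next I would bound both the size of each class and the number of distinct classes. The main device is \cref{lem:homo-critical}: a class that forms a homogeneous set induces a $\chi(S)$-vertex-critical $P_6$-free graph, hence for $\chi(S)\le 4$ lies in a finite list by \cref{thm:4-critical-P6} together with the analogous finite lists for $\chi\le 3$; the only $3$-vertex-critical $P_6$-free graphs are $K_3$ and $C_5$, and the possibility that such a homogeneous set induces $C_5$ is precisely what the hypothesis forbids. The criticality constraints of \cref{lem:critical} (no clique cutset, no comparable vertices) and the stronger \cref{lem:comparable} would then be used to forbid one class from dominating another and to force adjacencies between classes, bounding how classes attach to one another and to the $C_5$. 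The structural claims needed here—exactly which traces are realisable and how the realisable classes may be adjacent—are the statements to be verified exhaustively with the algorithm of \cref{sec:algo}; once they pin down a finite set of attachment patterns together with a uniform bound on each class, the order of $G$ is bounded and only finitely many such $G$ remain.

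The hard part will be the analysis of vertices with a ``large'' trace (adjacent to three or more vertices of the $C_5$) and of pairs of outside vertices whose joint neighbourhoods could spawn a fresh induced $P_6$, a bull, or—most dangerously—a new homogeneous $C_5$, which would reintroduce exactly the substitution behaviour (via \cref{lem:substitute}) responsible for infinite families. Controlling these interactions is where the no-homogeneous-$C_5$ hypothesis does its real work: it prevents an unbounded substitutable block from sitting inside the partition and is ultimately what guarantees that the computer search behind the structural claims terminates. Making the finitely many surviving configurations explicit, and certifying with the algorithm that no admissible configuration was overlooked, is the most delicate and computation-heavy step.
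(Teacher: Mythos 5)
Your proposal is correct and follows essentially the same route as the paper: a Strong Perfect Graph Theorem reduction (ruling out long holes by $P_6$-freeness and long antiholes by criticality, the paper using $K_5\subseteq\overline{C_{11}}$ where you use chromatic number) to the two cases of an induced $\overline{C_7}$, disposed of by the generation algorithm, or an induced $C_5$, around which the remaining vertices are partitioned by their traces. The subsequent plan—restricting admissible traces via $P_6$/bull arguments, bounding homogeneous classes through \cref{lem:homo-critical}, \cref{thm:4-critical-P6} and the no-homogeneous-$C_5$ hypothesis, invoking \cref{lem:critical} and \cref{lem:comparable} to control attachments, and certifying the remaining structural claims with the algorithm of \cref{sec:algo}—is exactly the skeleton of the paper's proof, which carries it out in a long sequence of claims combined with several targeted computer searches.
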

\begin{proof}
Let $G$ be a 5-vertex-critical ($P_6$,bull)-free graph with no homogeneous $C_5$. Since $K_5$ and $\overline{C_9}$ are 5-vertex-critical, we may assume that $G$ is $(K_5, \overline{C_9})$-free. By \cref{thm:SPGT} and since $G$ is $P_6$-free, $G$ contains an induced  $C_5$ or $\overline{C_7}$. We run the program for 5-vertex-critical ($P_6$, bull)-free graphs that contain an induced $\overline{C_7}$. The program terminates and outputs all 28 $5$-vertex-critical ($P_6$, bull)-free graphs that contain an induced $\overline{C_7}$. So we may assume that $G$ contains an induced $C_5$. 

Let $Q=\{u_1, u_2, u_3, u_4, u_5\}$ be an induced $C_5$ in $G$ with $u_iu_{i+1}\in E(G)$ for $i=1,\ldots,5$, with all indices modulo 5. For each $i=1,\ldots,5$, let

$\bullet$ $S_1(i)=\{v\in V(G)\setminus Q: N_Q(v)=\{u_i\}\}$,

$\bullet$ $S_2(i)=\{v\in V(G)\setminus Q: N_Q(v)=\{u_{i-1},u_{i+1}\}\}$,

$\bullet$ $S_3(i)=\{v\in V(G)\setminus Q: N_Q(v)=\{u_{i-1},u_i,u_{i+1}\}\}$,

$\bullet$ $S_4(i)=\{v\in V(G)\setminus Q: N_Q(v)=\{u_{i-2},u_{i-1},u_{i+1},u_{i+2}\}\}$

and let

$\bullet$ $S_5=\{v\in V(G)\setminus Q: N_Q(v)=Q\}$,

$\bullet$ $S_0=\{v\in V(G)\setminus Q: N_Q(v)=\emptyset\}$.

Let $S_j=\bigcup_{i=1}^5 S_j(i)$ for $j=1,2,3,4$ (and note that every vertex in $S_j$ has exactly $j$ neighbours in the set $Q$).

In the following, we discuss the properties of these vertex sets, and finally prove that $|V(G)|$ is bounded by bounding each of these vertex sets. Since $Q$ is an arbitrary $C_5$ in $G$, these properties hold when any induced $C_5$ in $G$ is treated as $Q$. If we say that we may assume that $G$ has some property, we mean that there are finitely many 5-vertex-critical ($P_6$,bull)-free graphs with no homogeneous $C_5$ that do not satisfy the property.
%We omit some short proofs that simply use ($P_6$,bull)-freeness.

\begin{enumerate}[label=(\arabic*),series=edu]

\item\label{clm:homo-chi}
We may assume that if $X\subset V(G)$ is a homogeneous set of $G$, then $\chi(G[X])\leq 3$ and $G[X]$ is a clique.

\begin{proof2}
Since $G$ is 5-vertex-critical, $\chi(G[X])\leq 4$. By \cref{lem:homo-critical}, $X$ is $\chi(G[X])$-vertex-critical. If $\chi(G[X])=1$ or $2$, then $G[X]$ is a $K_1$ or $K_2$, respectively. If $\chi(G[X])=3$, then $G[X]$ is an odd cycle, and so is a $K_3$ since $G$ has no homogeneous $C_5$ and is $P_6$-free. If $\chi(G[X])=4$, since $G$ is connected by \cref{lem:critical}, there is a vertex $v\in N(X)$. Since $X$ is homogeneous, $v$ is complete to $X$, and so $\chi(G[X\cup \{v\}])=5$. So $V(G)=X\cup \{v\}$. Since there are finitely many 4-vertex-critical ($P_6$,bull)-free graphs by \cref{thm:4-critical-P6}, we may assume that $G$ has no 4-chromatic homogeneous sets.
\end{proof2}

\item\label{clm:whole}
$V(G)=Q\cup S_1\cup S_2\cup S_3\cup S_4\cup S_5\cup S_0$.

\begin{proof2}
It suffices to prove that for any $v\in V(G)\setminus Q$, $N_Q(v)$ cannot be $\{u_i,u_{i+1}\}$ or $\{u_i,u_{i+1},u_{i+3}\}$ for any $i$. If $N_Q(v)=\{u_i,u_{i+1}\}$ or $\{u_i,u_{i+1},u_{i+3}\}$, then $\{u_{i-1},u_i,u_{i+1},u_{i+2},v\}$ induces a bull.
\end{proof2}

%\begin{proof2}
%Suppose that $v\in V(G)\setminus Q$. If $|N_Q(v)|=q$ for $q=0,1,4,5$, then $v\in S_q$. So we may assume that $|N_Q(v)|=2$ or 3, and $v\notin S_2\cup S_3$. Then $N_Q(v)=\{u_i,u_{i+1}\}$ or $\{u_i,u_{i+1},u_{i+3}\}$ for some $i$. Then $\{u_{i-1},u_i,u_{i+1},u_{i+2},v\}$ induces a bull, a contradiction.
%\end{proof2}

\item\label{clm:0-134}
$S_0$ is anti-complete to $S_1\cup S_3\cup S_4$.

\begin{proof2}
Let $v_1\in S_0$. If $v_2\in S_1(i)$ is adjacent to $v_1$, then $\{v_1,v_2,u_i,u_{i+1},u_{i+2},u_{i+3}\}$ induces a $P_6$. If $v_3\in S_3(i)\cup S_4(i+2)$ is adjacent to $v_1$, then $v_3u_i,v_3u_{i+1}\in E(G)$ and $v_3u_{i+2}\notin E(G)$, and then $\{v_1,v_3,u_{i+1},u_{i+2},u_i\}$ induces a bull.
\end{proof2}

\item\label{clm:0-2}
Each component of $S_0$ is pure to each component of $S_2(i)$.

\begin{proof2}
If $v_1\in S_0$ is mixed on an edge $v_2v_3$ in $S_2(i)$, then $\{v_1,v_2,v_3,u_{i+1},u_{i+2}\}$ induces a bull. If $v_4\in S_2(i)$ is mixed on an edge $v_5v_6$ in $S_0$, then $\{v_5,v_6,v_4,u_{i+1},u_{i+2},u_{i+3}\}$ induces a $P_6$.
\end{proof2}

\item\label{clm:5-stable}
We may assume that $S_5$ is a stable set.

\begin{proof2}
If there is a $K_2$ in $S_5$, then $G$ contains an induced $C_5\vee K_2$, which is 5-vertex-critical.
\end{proof2}

\item\label{clm:5-12}
$S_5$ is complete to $S_1\cup S_2$.

\begin{proof2}
If $v_1\in S_5$ and $v_2\in S_1(i)\cup S_2(i+1)$ are nonadjacent, then $\{v_2,u_i,v_1,u_{i+3},u_{i+1}\}$ induces a bull.
\end{proof2}

\item\label{clm:1-1}
$S_1(i)$ is complete to $S_1(i+2)\cup S_1(i-2)$ and anti-complete to $S_1(i+1)\cup S_1(i-1)$.

\begin{proof2}
By symmetry, it suffices to prove that $S_1(i)$ is complete to $S_1(i+2)$ and anti-complete to $S_1(i+1)$. Let $v_1\in S_1(i)$. If $v_2\in S_1(i+2)$ is nonadjacent to $v_1$, then $\{v_1,u_i,u_{i+4},u_{i+3},u_{i+2},v_2\}$ induces a $P_6$. If $v_3\in S_1(i+1)$ is adjacent to $v_1$, then \\ $\{v_1,v_3,u_{i+1},u_{i+2},u_{i+3},u_{i+4}\}$ induces a $P_6$.
\end{proof2}

%\begin{proof2}
%Let $v_1\in S_1(i)$. By symmetry we may suppose that $v_2\in S_1(i+2)$ is nonadjacent to $v_1$, or $v_3\in S_1(i+1)$ is adjacent to $v_1$. Then $\{v_1,u_i,u_{i+4},u_{i+3},u_{i+2},v_2\}$ or $\{v_1,v_2,u_{i+1},u_{i+2},u_{i+3},u_{i+4}\}$ induces a $P_6$.
%\end{proof2}

\item\label{clm:1-2+0}
Each component of $S_1(i)$ is pure to each component of $S_2(i)$.

\begin{proof2}
If $v_1\in S_1(i)$ is mixed to an edge $v_2v_3$ in $S_2(i)$, then $\{v_1,v_2,v_3,u_{i+1},u_{i+2}\}$ induces a bull. If $v_4\in S_2(i)$ is mixed to an edge $v_5v_6$ in $S_1(i)$, then $\{v_5,v_6,v_4,u_{i+1},u_{i+2},u_{i+3}\}$ induces a $P_6$.
\end{proof2}

\item\label{clm:1-2+1}
$S_1(i)$ is anti-complete to $S_2(i+1)\cup S_2(i-1)$.

\begin{proof2}
By symmetry it suffices to prove that $S_1(i)$ is anti-complete to $S_2(i+1)$. If $v_1\in S_1(i)$ and $v_2\in S_2(i+1)$ are adjacent, then $\{u_{i-1},u_i,v_2,u_{i+2},v_1\}$ induces a bull.
\end{proof2}

\item\label{clm:1-2+2}
$S_1(i)$ is pure to each component of $S_2(i+2)$ and pure to each component of $S_2(i-2)$.

\begin{proof2}
By symmetry it suffices to prove that $S_1(i)$ is pure to each component of $S_2(i+2)$. If $v_1\in S_1(i)$ is mixed on an edge $v_2v_3\in S_2(i+2)$, then $\{v_1,v_2,v_3,u_{i+1},u_{i+2}\}$ induces a bull. Let $v_4,v_5\in S_1(i)$ and $v_6\in S_2(i+2)$, and let $v_4v_6\in E(G)$ and $v_5v_6\notin E(G)$. If $v_4v_5\in E(G)$, then $\{v_6,v_4,u_i,u_{i-1},v_5\}$ induces a bull. If $v_4v_5\notin E(G)$, then $\{v_5,u_i,v_4,v_6,u_{i+3},u_{i+2}\}$ induces a $P_6$.
\end{proof2}

\item\label{clm:1-3}
$S_1(i)$ is complete to $S_3(i)$ and anti-complete to $S_3\setminus S_3(i)$.

\begin{proof2}
By symmetry it suffices to prove that $S_1(i)$ is complete to $S_3(i)$ and anti-complete to $S_3(i+1)\cup S_3(i+2)$. Let $v_1\in S_1(i)$. If $v_2\in S_3(i)$ is nonadjacent to $v_1$, then $\{v_1,u_i,u_{i+1},u_{i+2},v_2\}$ induces a bull. If $v_3\in S_3(i+1)$ is adjacent to $v_1$, then $\{u_{i-1},u_i,v_3,u_{i+2},v_1\}$ induces a bull. If $v_4\in S_3(i+2)$ is adjacent to $v_1$, then $\{v_1,v_4,u_{i+3},u_{i+4},u_{i+2}\}$ induces a bull.
\end{proof2}

\item\label{clm:1-4+12}
If $S_1(i)$ is nonempty, then $S_4\setminus S_4(i)$ is empty.

\begin{proof2}
By symmetry it suffices to prove that if $S_1(i)$ is nonempty, then $S_4(i+1)\cup S_4(i+2)$ is empty. Let $v_1\in S_1(i)$. If there exists $v_2\in S_4(i+1)$, then $\{u_{i+1},u_i,v_2,u_{i+3},v_1\}$ induces a bull if $v_1v_2\in E(G)$, or $\{v_1,u_i,v_2,u_{i+2},u_{i-1}\}$ induces a bull if $v_1v_2\notin E(G)$. If there exists $v_3\in S_4(i+2)$, then $\{v_1,v_3,u_{i+3},u_{i+2},u_{i+4}\}$ induces a bull if $v_1v_3\in E(G)$, or $\{v_1,u_i,u_{i+1},u_{i+2},v_3\}$ induces a bull if $v_1v_3\notin E(G)$.
\end{proof2}

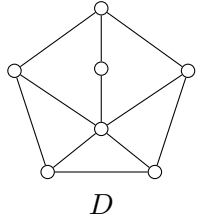
\begin{figure}[h!]
\centering
\begin{tikzpicture}[scale=0.4]
\tikzstyle{vertex}=[circle, draw, fill=white, inner sep=1pt, minimum size=5pt]

	\node at (0,-3.5) {$D$};
	\node[vertex](1) at (0,3) {};
	\node[vertex](2) at (-{3*cos(18)},{3*sin(18}) {};
	\node[vertex](3) at (-{3*sin(36)},-{3*cos(36}) {};
   	\node[vertex](4) at ({3*sin(36)},-{3*cos(36}) {};
	\node[vertex](5) at ({3*cos(18)},{3*sin(18}) {};
	\node[vertex](6) at (0,1) {};
        \node[vertex](7) at (0,-1) {};
        
    \foreach \from/\to in {1/2,2/3,3/4,4/5,1/5,1/6,2/7,3/7,4/7,5/7,6/7}
		\draw (\from) -- (\to);
\end{tikzpicture}
\caption{The graph $D$.}\label{fig:D}
\end{figure}

\item\label{clm:1-4+0}
$S_1(i)$ is anti-complete to $S_4(i)$.

\begin{proof2}
Let $D$ be the 7-vertex graph with $Q$, and two vertices $v_1\in S_1(1)$ and $v_2\in S_4(1)$, and make $v_1$ and $v_2$ adjacent (see \cref{fig:D}). We run the program for 5-vertex-critical ($P_6$, bull)-free graphs that contain an induced $D$. The program terminates and outputs 0 graphs.
\end{proof2}

Combining \ref{clm:whole}, \ref{clm:0-134} and \ref{clm:5-12}-\ref{clm:1-4+0} we have the following \ref{clm:1-homo} and \ref{clm:1-private}.

\item\label{clm:1-homo}
Each component of $S_1(i)$ is homogeneous.

\item\label{clm:1-private}
Let $v_1,v_2$ be in different components of $S_1(i)$. If $v_3\in V(G)\setminus S_1(i)$ is adjacent to $v_1$ and nonadjacent to $v_2$, then $v_3\in S_2(i)$.

\item\label{clm:2-2+1}
Each component of $S_2(i)$ is pure to each component of $S_2(i+1)$ and pure to each component of $S_2(i-1)$.

\begin{proof2}
By symmetry it suffices to prove that each vertex of $S_2(i)$ is pure to each component of $S_2(i+1)$. If $v_1\in S_2(i)$ is mixed on an edge $v_2v_3$ in $S_2(i+1)$, then $\{v_1,v_2,v_3,u_{i+2},u_{i+3}\}$ induces a bull.
\end{proof2}

\item\label{clm:2-2+2}
$S_2(i)$ is anti-complete to $S_2(i+2)\cup S_2(i-2)$.

\begin{proof2}
By symmetry it suffices to prove that $S_2(i)$ is anti-complete to $S_2(i+2)$. If $v_1\in S_2(i)$ and $v_2\in S_2(i+2)$ are adjacent, then $\{u_{i-1},v_1,u_{i+1},u_{i+2},v_2\}$ induces a bull.
\end{proof2}

\item\label{clm:2-3+12}
$S_2(i)$ is complete to $S_3(i+1)\cup S_3(i-1)$ and anti-complete to $S_3(i+2)\cup S_3(i-2)$.

\begin{proof2}
By symmetry it suffices to prove that $S_2(i)$ is complete to $S_3(i+1)$ and anti-complete to $S_3(i+2)$. Let $v_1\in S_2(i)$. If $v_2\in S_3(i+1)$ is nonadjacent to $v_1$, then $\{v_1,u_{i+1},u_{i+2},u_{i+3},v_2\}$ induces a bull. If $v_3\in S_3(i+2)$ is adjacent to $v_1$, then $\{u_i,u_{i+1},v_3,u_{i+3},v_1\}$ induces a bull.
\end{proof2}

\item\label{clm:2-4+12}
$S_2(i)$ is complete to $S_4\setminus S_4(i)$.

\begin{proof2}
By symmetry it suffices to prove that $S_2(i)$ is complete to $S_4(i+1)\cup S_4(i+3)$. If $v_1\in S_2(i)$ and $v_2\in S_4(i+1)\cup S_4(i+3)$ are nonadjacent, then $\{v_1,u_{i+4},v_2,u_{i+2},u_i\}$ induces a bull.
\end{proof2}

\begin{figure}[h!]
\centering
\begin{tikzpicture}[scale=0.35]
\tikzstyle{vertex}=[circle, draw, fill=white, inner sep=1pt, minimum size=5pt]

	\node at (0,-3.5) {$H_1$};
	\node[vertex](1) at (0,3) {};
	\node[vertex](2) at (-{3*cos(18)},{3*sin(18}) {};
	\node[vertex](3) at (-{3*sin(36)},-{3*cos(36}) {};
   	\node[vertex](4) at ({3*sin(36)},-{3*cos(36}) {};
	\node[vertex](5) at ({3*cos(18)},{3*sin(18}) {};
	\node[vertex](6) at (0,1) {};
        \node[vertex](7) at (0,-1) {};
        
    \foreach \from/\to in {1/2,2/3,3/4,4/5,1/5,2/6,5/6,2/7,3/7,4/7,5/7,6/7}
		\draw (\from) -- (\to);
\end{tikzpicture}
\hspace{10mm}
\begin{tikzpicture}[scale=0.35]
\tikzstyle{vertex}=[circle, draw, fill=white, inner sep=1pt, minimum size=5pt]

	\node at (0,-3.5) {$H_2$};
	\node[vertex](1) at (0,3) {};
	\node[vertex](2) at (-{3*cos(18)},{3*sin(18}) {};
	\node[vertex](3) at (-{3*sin(36)},-{3*cos(36}) {};
   	\node[vertex](4) at ({3*sin(36)},-{3*cos(36}) {};
	\node[vertex](5) at ({3*cos(18)},{3*sin(18}) {};
	\node[vertex](6) at (0,1) {};
        \node[vertex](7) at (0,-1) {};
        
    \foreach \from/\to in {1/2,2/3,3/4,4/5,1/5,1/6,3/6,2/7,3/7,4/7,5/7,6/7}
		\draw (\from) -- (\to);
\end{tikzpicture}
\hspace{10mm}
\begin{tikzpicture}[scale=0.35]
\tikzstyle{vertex}=[circle, draw, fill=white, inner sep=1pt, minimum size=5pt]

	\node at (0,-3.5) {$H_3$};
	\node[vertex](1) at (0,3) {};
	\node[vertex](2) at (-{3*cos(18)},{3*sin(18}) {};
	\node[vertex](3) at (-{3*sin(36)},-{3*cos(36}) {};
   	\node[vertex](4) at ({3*sin(36)},-{3*cos(36}) {};
	\node[vertex](5) at ({3*cos(18)},{3*sin(18}) {};
	\node[vertex](6) at (0,1) {};
        \node[vertex](7) at (0,-1) {};
        
    \foreach \from/\to in {1/2,2/3,3/4,4/5,1/5,2/6,4/6,2/7,3/7,4/7,5/7,6/7}
		\draw (\from) -- (\to);
\end{tikzpicture}
\hspace{10mm}
\begin{tikzpicture}[scale=0.35]
\tikzstyle{vertex}=[circle, draw, fill=white, inner sep=1pt, minimum size=5pt]

	\node at (0,-3.5) {$H_4$};
	\node[vertex](1) at (0,3) {};
	\node[vertex](2) at (-{3*cos(18)},{3*sin(18}) {};
	\node[vertex](3) at (-{3*sin(36)},-{3*cos(36}) {};
   	\node[vertex](4) at ({3*sin(36)},-{3*cos(36}) {};
	\node[vertex](5) at ({3*cos(18)},{3*sin(18}) {};
	\node[vertex](6) at (0,1) {};
        \node[vertex](7) at (0,-1) {};
        
    \foreach \from/\to in {1/2,2/3,3/4,4/5,1/5,3/6,5/6,2/7,3/7,4/7,5/7,6/7}
		\draw (\from) -- (\to);
\end{tikzpicture}
\hspace{10mm}
\begin{tikzpicture}[scale=0.35]
\tikzstyle{vertex}=[circle, draw, fill=white, inner sep=1pt, minimum size=5pt]

	\node at (0,-3.5) {$H_5$};
	\node[vertex](1) at (0,3) {};
	\node[vertex](2) at (-{3*cos(18)},{3*sin(18}) {};
	\node[vertex](3) at (-{3*sin(36)},-{3*cos(36}) {};
   	\node[vertex](4) at ({3*sin(36)},-{3*cos(36}) {};
	\node[vertex](5) at ({3*cos(18)},{3*sin(18}) {};
	\node[vertex](6) at (0,1) {};
        \node[vertex](7) at (0,-1) {};
        
    \foreach \from/\to in {1/2,2/3,3/4,4/5,1/5,4/6,1/6,2/7,3/7,4/7,5/7,6/7}
		\draw (\from) -- (\to);
\end{tikzpicture}
\caption{The graphs $H_1$, $H_2$, $H_3$, $H_4$ and $H_5$.}\label{fig:Hi}
\end{figure}
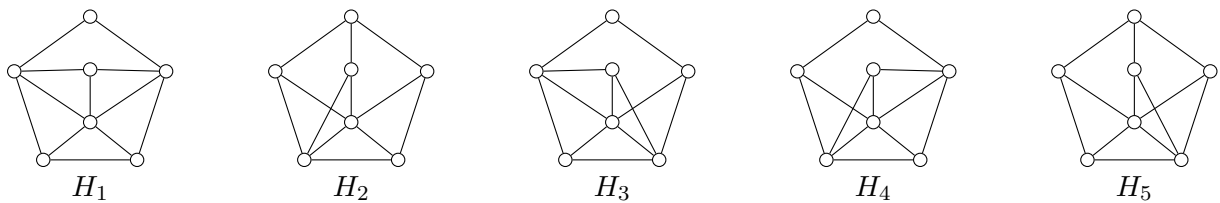

\item\label{clm:2-4+0}
$S_2(i)$ is anti-complete to $S_4$.

\begin{proof2}
For $i=1,2,3,4,5$, let $H_i$ be the 7-vertex graph with $Q$, and two vertices $v_1\in S_2(1)$ and $v_2\in S_4(i)$, and make $v_1$ and $v_2$ adjacent (see \cref{fig:Hi}). Note that $H_1$ contains an induced bull, so it cannot occur as induced subgraph of any bull-free graph. Moreover, $H_2$ is isomorphic with $H_5$ and $H_3$ is isomorphic with $H_4$. We run the program for 5-vertex-critical ($P_6$, bull)-free graphs that contain an induced $H_i$ ($i \in \{2,3\}$). For $i=2$ and $i=3$, the program terminates and outputs 0 graphs.
\end{proof2}

Combining \ref{clm:2-4+12} and \ref{clm:2-4+0} we have the following \ref{clm:2-4+12-2}.

\item\label{clm:2-4+12-2}
If $S_2(i)$ is nonempty, then $S_4\setminus S_4(i)$ is empty. 

Combining \ref{clm:whole},  \ref{clm:0-2}, \ref{clm:5-12}, \ref{clm:1-2+0}-\ref{clm:1-2+2}, and \ref{clm:2-2+1}-\ref{clm:2-4+12-2}, we have the following \ref{clm:2-homo}.

\item\label{clm:2-homo}
Let $X$ be a component of $S_2(i)$. If $X$ is anti-complete to $S_3(i)$, then $X$ is homogeneous.

\item\label{clm:3-3+1}
$S_3(i)$ is complete to $S_3(i+1)\cup S_3(i-1)$.

\begin{proof2}
By symmetry it suffices to prove that $S_3(i)$ is complete to $S_3(i+1)$. If $v_1\in S_3(i)$ and $v_2\in S_3(i+1)$ are nonadjacent, then $\{v_1,u_{i+1},u_{i+2},u_{i+3},v_2\}$ induces a bull.
\end{proof2}

\begin{figure}[h!]
\centering
\begin{tikzpicture}[scale=0.4]
\tikzstyle{vertex}=[circle, draw, fill=white, inner sep=1pt, minimum size=5pt]

	\node at (0,-3.5) {$E$};
	\node[vertex](1) at (0,3) {};
	\node[vertex](2) at (-{3*cos(18)},{3*sin(18}) {};
	\node[vertex](3) at (-{3*sin(36)},-{3*cos(36}) {};
   	\node[vertex](4) at ({3*sin(36)},-{3*cos(36}) {};
	\node[vertex](5) at ({3*cos(18)},{3*sin(18}) {};
	\node[vertex](6) at (0,1) {};
        \node[vertex](7) at (0,-1) {};
        
    \foreach \from/\to in {1/2,2/3,3/4,4/5,1/5,2/6,3/6,4/6,5/6,2/7,3/7,4/7,5/7}
		\draw (\from) -- (\to);
\end{tikzpicture}
\hspace{10mm}
\caption{The graph $E$.}\label{fig:E}
\end{figure}
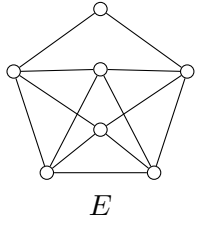

\item\label{clm:4-clique}
Each $S_4(i)$ is a clique.

\begin{proof2}
Let $E$ be the 7-vertex graph with $Q$, and a $2K_1$ in $S_4(1)$. We run the program for 5-vertex-critical ($P_6$, bull)-free graphs that contain an induced $E$. The program terminates and outputs 0 graphs.
\end{proof2}

\item\label{clm:4-size}
$|S_4(i)|\leq 2$ for each $1 \leq i \leq 5$.

\begin{proof2}
Suppose not, by \ref{clm:4-clique}, there is a $K_3$ in $S_4(i)$, which together with $\{u_{i+1},u_{i+2}\}$ induces a $K_5$.
\end{proof2}

\item\label{clm:5+0-34}
If $v_1\in S_5$ has a neighbour in $S_0$, then $v_1$ is complete to $S_3\cup S_4$.

\begin{proof2}
Let $v_2\in S_0$ be adjacent to $v_1$. Suppose that $v_3\in S_3(i)\cup S_4(i+2)$ is nonadjacent to $v_1$. Then $v_3u_{i+1}\in E(G)$ and $v_3u_{i+2}\notin E(G)$. By \ref{clm:0-134}, $v_2v_3\notin E(G)$. Therefore, $\{v_2,v_1,u_{i+1},v_3,u_{i+2}\}$ induces a bull.
\end{proof2}

\item\label{clm:0+2-5}
If $v_1\in S_0$ has a neighbour in $S_2(i)$, then $v_1$ is complete to $S_5$.

\begin{proof2}
Let $v_2\in S_2(i)$ be adjacent to $v_1$. Suppose that $v_3\in S_5$ is nonadjacent to $v_1$. By \ref{clm:5-12}, $v_2v_3\in E(G)$. Then $\{v_1,v_2,v_3,u_{i+3},u_{i+1}\}$ induces a bull.
\end{proof2}

\item\label{clm:5or0empty}
At least one of $S_5$ and $S_0$ is empty.

\begin{proof2}
Suppose not, then $S_0$ and $S_5$ are nonempty. First we suppose that every component of $S_0$ has a neighbour in $S_2$. By \ref{clm:0-2} and \ref{clm:0+2-5}, every component of $S_0$ is complete to $S_5$. By \ref{clm:5-12} and \ref{clm:5+0-34}, $S_5$ is complete to $S_1\cup S_2\cup S_3\cup S_4$. So $V(G)\setminus S_5$ is complete to $S_5$. Since $S_5$ is a stable set by \ref{clm:5-stable}, and since $G$ is 5-vertex-critical, $V(G)\setminus S_5$ is a 4-chromatic homogeneous set, which contradicts \ref{clm:homo-chi}.

So there is a component $L$ of $S_0$ that is anti-complete to $S_2$. By \ref{clm:0-134}, $N(L)\subseteq S_5$. By \ref{clm:5-stable} and since $G$ is connected and has no clique cutsets by \cref{lem:critical}, $N(L)$ is a stable set on at least two vertices. In all 4-colourings of $G-L$, $N(L)\subseteq S_5$ is monochromatic, because $\chi(G[Q])=3$. Since $G$ is not 4-colourable, this in turn implies that there is no 4-colouring of $G[L\cup N(L)]$ such that $N(L)$ is monochromatic. Let $G'=G[Q\cup N(L)\cup L]$. In all 4-colourings of $G'-L$, $N(L)$ is monochromatic. So $G'$ is not 4-colourable. Since $G$ is 5-vertex-critical, we have $G=G'$. Then $Q$ is a homogeneous $C_5$ in $G$, a contradiction.
\end{proof2}

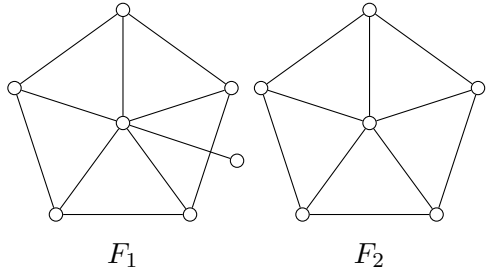
\begin{figure}[h!]
\centering
\begin{tikzpicture}[scale=0.5]
\tikzstyle{vertex}=[circle, draw, fill=white, inner sep=1pt, minimum size=5pt]

    \node at (0,-3.5) {$F_1$};
    \node[vertex](1) at (0,3) {};
    \node[vertex](2) at (-{3*cos(18)},{3*sin(18}){} ;
    \node[vertex](3) at (-{3*sin(36)},-{3*cos(36}) {};
    \node[vertex](4) at ({3*sin(36)},-{3*cos(36}) {};
	\node[vertex](5) at ({3*cos(18)},{3*sin(18}) {};
	\node[vertex](6) at (0,0){} ;
	\node[vertex](7) at (3,-1) {};

    \foreach \from/\to in {1/2,2/3,3/4,4/5,1/5,1/6,2/6,3/6,4/6,5/6,6/7}
		\draw (\from) -- (\to);

\end{tikzpicture}
\begin{tikzpicture}[scale=0.5]
\tikzstyle{vertex}=[circle, draw, fill=white, inner sep=1pt, minimum size=5pt]

    \node at (0,-3.5) {$F_2$};
    \node[vertex](1) at (0,3) {};
    \node[vertex](2) at (-{3*cos(18)},{3*sin(18}){} ;
    \node[vertex](3) at (-{3*sin(36)},-{3*cos(36}) {};
    \node[vertex](4) at ({3*sin(36)},-{3*cos(36}) {};
    \node[vertex](5) at ({3*cos(18)},{3*sin(18}) {};
    \node[vertex](6) at (0,0){} ;

    \foreach \from/\to in {1/2,2/3,3/4,4/5,1/5,1/6,2/6,3/6,4/6,5/6}
		\draw (\from) -- (\to);

\end{tikzpicture}
\caption{The graphs $F_1$ and $F_2$.}\label{fig:F1}
\end{figure}

%\item\label{clm:F1-free}
%$G$ is $F_1$-free.

\item\label{clm:S5-empty}
We may assume that $G$ is $F_2$-free (see \cref{fig:F1} for the graph $F_2$), and $S_5$ is empty.

\begin{proof2}
Let $F_1$ be the first graph in  \cref{fig:F1}.  By \ref{clm:5or0empty} and since $Q$ is an arbitrary induced $C_5$ in $G$, $G$ is $F_1$-free. We run the program for 5-vertex-critical ($P_6$,bull,$F_1$)-free graphs that contain an induced $F_2$. The program terminates and outputs 94 graphs.
\end{proof2}

By \ref{clm:0-134}, \ref{clm:0-2} and \ref{clm:S5-empty}, we have the following \ref{clm:0-homo}.

\item\label{clm:0-homo}
Each component of $S_0$ is homogeneous.

\item\label{clm:2--1-3}
If $v_1\in S_2(i)$ has a nonneighbour $v_2\in S_1(i)$, then $v_1$ is anti-complete to $S_3(i)$.

\begin{proof2}
Suppose that $v_3\in S_3(i)$ is adjacent to $v_1$. By \ref{clm:1-3}, $v_2v_3\in E(G)$. Then $\{v_2,v_3,u_{i+1},u_{i+2},v_1\}$ induces a bull.
\end{proof2}

\item\label{clm:2--0-3}
If $v_1\in S_2(i)$ has a neighbour $v_2\in S_0$, then $v_1$ is anti-complete to $S_3(i)$.

\begin{proof2}
Suppose that $v_3\in S_3(i)$ is adjacent to $v_1$. By \ref{clm:0-134}, $v_2v_3\notin E(G)$. Then $\{v_2,v_1,u_{i+1},u_{i+2},v_3\}$ induces a bull.
\end{proof2}

\vspace{0.5cm}
Let $A(i)=\{v\in V(G)\setminus(Q\setminus\{u_i\}): N_{Q\setminus\{u_i\}}(v)=\{u_{i+1},u_{i-1}\}\}=\{u_i\}\cup S_2(i)\cup S_3(i)$, and $B(i)=\{v\in V(G)\setminus(Q\setminus\{u_i\}): N_{Q\setminus\{u_i\}}(v)=\emptyset\}=S_1(i)\cup S_0$. We use the following \ref{clm:A-2component-homo}-\ref{clm:1-clique} to characterise $A(i)$ and $B(i)$.

\item\label{clm:A-2component-homo}
Suppose that $A(i)$ has at least two components. Then each component of $A(i)$ is homogeneous and is a clique on at most 3 vertices.

\begin{proof2}
Let $X$ be a component of $A(i)$. Since any vertex in  $A(i)$ together with $Q\setminus\{u_i\}$ induces a $C_5$, we may assume that $u_i\notin X$. Then $X$ is a component of $S_2(i)$, and $X$ is anti-complete to $S_3(i)$. By \ref{clm:2-homo}, $X$ is homogeneous. By \ref{clm:homo-chi}, $G[X]$ is a clique on at most 3 vertices.
\end{proof2}

\item\label{clm:A-1component-clique}
Suppose that $A(i)$ has only one component. Then $A(i)$ is a clique on at most 3 vertices.

\begin{proof2}
Since $G[A(i)]$ is connected, for each component $P$ of $S_2(i)$, $P$ has a neighbour in $S_3(i)$. Let $p\in P$ have a neighbour in $S_3(i)$. By  \ref{clm:2--1-3} and \ref{clm:2--0-3}, $p$ is complete to $S_1(i)$ and anti-complete to $S_0$. By \ref{clm:0-2} and \ref{clm:1-2+0}, $P$ is complete to $S_1(i)$ and anti-complete to $S_0$. By \ref{clm:0-134} and \ref{clm:1-3}, $A(i)$ is complete to $S_1(i)$ and anti-complete to $S_0$. 
By \ref{clm:1-2+1} and \ref{clm:1-3}, $S_1(i+1)\cup S_1(i-1)$ is anti-complete to $A(i)$. By \ref{clm:2-2+2} and \ref{clm:2-3+12}, $S_2(i+2)\cup S_2(i-2)$ is anti-complete to $A(i)$. By \ref{clm:2-3+12} and \ref{clm:3-3+1}, $S_3(i+1)\cup S_3(i-1)$ is complete to $A(i)$. By \ref{clm:S5-empty}, $S_5$ is empty. Suppose that $v\in S_1(i+2)\cup S_1(i-2)\cup S_2(i+1)\cup S_2(i-1)$ is mixed on an edge $a_1a_2$ in $A(i)$. By symmetry we may assume that $N_{Q\setminus\{u_i\}}(v)=\{u_{i+2}\}$. Then $\{a_1,a_2,u_{i-1},u_{i-2},v\}$ induces a bull. So each vertex in $S_1(i+2)\cup S_1(i-2)\cup S_2(i+1)\cup S_2(i-1)$ is pure to $A(i)$.
If $v\in S_4(i)$ is adjacent to $a\in A(i)$, then $\{v,a,u_{i+1},u_{i+2},u_{i-2},u_{i-1}\}$ induces an $F_2$, which contradicts \ref{clm:S5-empty}. So $S_4(i)$ is anti-complete to $A(i)$. Suppose that $v\in S_4(i+2)\cup S_4(i-2)$  has a nonneighbour $a$ in $A(i)$. By symmetry, let $v\in S_4(i-2)$. Then $\{u_{i+1},u_{i+2},v,a,u_{i-2}\}$ induces a bull. So $S_4(i+2)\cup S_4(i-2)$ is complete to $A(i)$.

By the above paragraph and by \ref{clm:whole}, if $v\in V(G)\setminus A(i)$ is mixed on $A(i)$, then $v\in S_3(i+2)\cup S_3(i-2)\cup S_4(i+1)\cup S_4(i-1)$. Suppose that $a_1,a_2\in A(i)$ are nonadjacent, and $v$ is adjacent to $a_1$ and nonadjacent to $a_2$. Note that $N_{Q\setminus\{u_i\}}(v)=\{u_{i+1},u_{i+2},u_{i-2}\}$ or $\{u_{i+2},u_{i-2},u_{i-1}\}$.
By symmetry let $N_{Q\setminus\{u_i\}}(v)=\{u_{i+1},u_{i+2},u_{i-2}\}$. Then $\{v,a_1,u_{i+1},a_2,u_{i-2}\}$ induces a bull. So for any pair of nonadjacent vertices in $A(i)$, their neighbourhoods in $G-A(i)$ are the same. Let $M$ be a component of $\overline{G[A_i]}$, then the vertices in $M$ have the same neighbourhood in $G-A(i)$, and $V(M)$ is complete to $A_i\setminus V(M)$ in $G$. So $V(M)$ is homogeneous in $G$. By \ref{clm:homo-chi} and since $\overline{G[V(M)]}$ is connected, $M$ is a single vertex. Since each component of $\overline{G[A_i]}$ is a single vertex, $A(i)$ is a clique in $G$. Since $u_{i+1}$ is complete to $A(i)$, and $G$ is $K_5$-free, $A(i)$ has at most 3 vertices.
\end{proof2}

By \ref{clm:A-2component-homo} and \ref{clm:A-1component-clique} we have the following \ref{clm:2-3+0} and \ref{clm:3-clique}.

\item\label{clm:2-3+0}
$S_2(i)$ is anti-complete to $S_3(i)$.

\item\label{clm:3-clique}
$S_3(i)$ is a clique on at most 2 vertices.

By \ref{clm:homo-chi}, \ref{clm:0-134}, \ref{clm:1-homo}, \ref{clm:0-homo}, \ref{clm:A-2component-homo} and \ref{clm:A-1component-clique}, we have the following \ref{clm:AB-pure}.

\item\label{clm:AB-pure}
Each component of $B(i)$ is a component of $S_1(i)$ or $S_0$. If $A(i)$ has at least two components, then each component of $A(i)$ or $B(i)$ is homogeneous, and is a clique on at most 3 vertices. 

\item\label{clm:AB-neighbourhood}
Suppose that $A(i)$ has at least two components. Let $X\cup Y$ be a component of $A(i)\cup B(i)$, with $X\subseteq A(i)$ and $Y\subseteq B(i)$. Then any $v\in V(G)\setminus(X\cup Y)$ is pure to $X$ and pure to $Y$.

\begin{proof2}
Since $A(i)$ has at least two components, we have $S_2(i)\neq\emptyset$. Thus, $S_4\setminus S_4(i)$ is empty by \ref{clm:2-4+12-2}. If $S_4(i)$ and $S_3(i)$ are non-empty and $v_1\in S_4(i)$ and $v_2\in S_3(i)$ are adjacent, then $\{v_1,v_2\}\cup(Q\setminus\{u_i\})$ induces an $F_2$, which contradicts \ref{clm:S5-empty}. So $S_4(i)$ is anti-complete to $S_3(i)$. By \ref{clm:whole}, \ref{clm:1-2+1}, \ref{clm:1-3}, \ref{clm:2-2+2}, \ref{clm:2-3+12}, \ref{clm:2-4+0}, \ref{clm:3-3+1} and \ref{clm:S5-empty}, if $v\in V(G)\setminus(X\cup Y)$ is mixed on $X$, then $v\in S_1(i+2)\cup S_1(i-2)\cup S_2(i+1)\cup S_2(i-1)\cup S_3(i+2)\cup S_3(i-2)$. By \ref{clm:whole}, \ref{clm:0-134}, \ref{clm:1-1}, \ref{clm:1-3}, \ref{clm:1-4+0} and \ref{clm:S5-empty}, if $v\in V(G)\setminus(X\cup Y)$ is mixed on $Y$, then $v\in S_1(i+2)\cup S_1(i-2)\cup S_2(i+1)\cup S_2(i-1)\cup S_2(i+2)\cup S_2(i-2)$.

Suppose that $v\in S_2(i+2)\cup S_2(i-2)$. By symmetry let $v\in S_2(i+2)$. Suppose that $y_1,y_2\in Y$ are such that $v$ is adjacent to $y_1$ and nonadjacent to $y_2$. By \ref{clm:AB-pure}, $y_1$ and $y_2$ are in different components of $Y$. By \ref{clm:2-2+2} and \ref{clm:2-3+12}, $v$ is anti-complete to $X$. Since $X\cup Y$ is connected, by \ref{clm:AB-pure} and since $G$ is $P_6$-free, there is an induced $P_3$ or $P_5$ in $X\cup Y$ such that $y_1$ and $y_2$ are degree 1 vertices of the path, and each edge in the path has an end in $X$ and an end in $Y$. If the path is $P_3=y_1x_1y_2$, then $\{y_2,x_1,y_1,v,u_{i-2},u_{i+2}\}$ induces a $P_6$. If the path is $P_5=y_1x_1y_3x_2y_2$, then $\{y_2,x_2,y_3,v,u_{i-2},u_{i+2}\}$ or $\{y_3,x_1,y_1,v,u_{i-2},u_{i+2}\}$ induces a $P_6$, depending on whether $v$ is adjacent to $y_3$.

Suppose that $v\in S_3(i+2)\cup S_3(i-2)$. By symmetry let $v\in S_3(i+2)$. Suppose that $x_1,x_2\in X$ are such that $v$ is adjacent to $x_1$ and nonadjacent to $x_2$. By \ref{clm:AB-pure}, $x_1$ and $x_2$ are nonadjacent. Then $\{x_1,u_{i+1},v,x_2,u_{i-2}\}$ induces a bull.

Suppose that $v\in S_1(i+2)\cup S_1(i-2)\cup S_2(i+1)\cup S_2(i-1)$. Then $N_{Q\setminus\{u_i\}}(v)=\{u_{i+2}\}$ or $\{u_{i-2}\}$. By symmetry let $v$ be adjacent to $u_{i+2}$. Let $xy$ be an edge in $X\cup Y$ with $x\in X$ and $y\in Y$. If $x,y$ are adjacent to $v$, then $\{v,x,y,u_{i+2},u_{i-1}\}$ induces a bull. If $x,y$ are nonadjacent to $v$, then $\{y,x,u_{i-1},u_{i-2},u_{i+2},v\}$ induces a $P_6$. So $v$ is adjacent to exactly one of $x$ and $y$. Let $x_1,x_2\in X$. If $x_1,x_2$ are in the same component of $A(i)$, then $v$ is pure to $\{x_1,x_2\}$ since each component of $A(i)$ is homogeneous. If $x_1,x_2$ are in different components of $A(i)$, by \ref{clm:AB-pure} and since $X\cup Y$ is connected, there is an induced path in $X\cup Y$ such that $x_1$ and $x_2$ are degree 1 vertices of the path, and each edge in the path has an end in $X$ and an end in $Y$. The path has an even number of edges, so $v$ is pure to $\{x_1,x_2\}$. This proves that $v$ is pure to $X$. By the same argument, $v$ is pure to $Y$.
\end{proof2}

\item\label{clm:AB-num-component}
Suppose that $A(i)$ has at least two components. Let $X\cup Y$ be a component of $A(i)\cup B(i)$, with $X\subseteq A(i)$ and $Y\subseteq B(i)$. Then each of $X$ and $Y$ has at most one component.

\begin{proof2}
In this proof we only use \ref{clm:AB-pure} and \ref{clm:AB-neighbourhood}, and the fact that $G$ is 5-vertex-critical and $K_5$-free, so the positions of $X$ and $Y$ are symmetric. If $X$ or $Y$ is empty, since $X\cup Y$ is connected, the other set has at most one component. So  we may assume that both $X$ and $Y$ are nonempty.

Suppose that $X$ or $Y$ has only one component, say $X$. Suppose that $Y_1$ and $Y_2$ are components of $Y$. Without loss of generality, we may assume that $\chi(Y_1)\leq \chi(Y_2)$. Then $Y_1$ is anti-complete to $Y_2$, and $Y_2$ is complete to $N(Y_1)$ by \ref{clm:AB-neighbourhood}. This contradicts \cref{lem:comparable}, which implies that $Y$ has only one component. So we may assume that both $X$ and $Y$ have at least two components. Let $\alpha_1,\alpha_2,\alpha_3,\alpha_4$ be 4 colours. We prove that in all cases, a 4-colouring of $G$ can be obtained from a 4-colouring of a proper induced subgraph of $G$, which contradicts the fact that $G$ is 5-vertex-critical.

First assume that there is a $K_4=X_1\cup Y_1$ in $X\cup Y$, with $X_1\subseteq X$ and $Y_1\subseteq Y$. Let $|X_1|=m$ and $|Y_1|=4-m$. Without loss of generality, we may assume that $m\geq 2$. Since $X\cup Y$ is connected, every 3-vertex component of $X$ has a neighbour in $Y$. So we may choose a 3-vertex component of $X$ to be $X_1$ if this exists. Thus, we may assume that $X_1$ is the largest component of $X$.  We may give $G-(X\setminus X_1)$ a 4-colouring. Assume that $X$ receives colours $\alpha_1,\ldots,\alpha_m$. Then $N(X)$ receives colours $\alpha_{m+1},\ldots,\alpha_4$. Since $Y_1$ uses all colours of $\alpha_{m+1},\ldots,\alpha_4$, these colours are not used in $N(Y)\setminus X$. Let $n$ be the size of the largest component of $Y$. Then there are at most $4-n$ colours that are used in $N(Y)\setminus X$. We may assume that the colours $\alpha_{5-n},\ldots,\alpha_4$ are not used in $N(Y)\setminus X$. Then we may change the colouring of $Y$ such that each $p$-vertex component of $Y$ receives colours $\alpha_{5-p},\ldots,\alpha_4$. Then we may colour each $q$-vertex component of $X\setminus X_1$ with $\alpha_1,\ldots,\alpha_q$ without conflict since $G$ is $K_5$-free. This gives a 4-colouring of $G$.

Then assume that $X\cup Y$ is $K_4$-free and has a $K_3=X_1\cup Y_1$, with $X_1\subseteq X$ and $Y_1\subseteq Y$. We may assume that $|X_1|=2$ and $|Y_1|=1$. Since $X\cup Y$ is connected, each of $X$ and $Y$ is $K_3$-free. We may give a 4-colouring of $G-(X\setminus X_1)$. Assume that $X$ receives colours $\alpha_1,\alpha_2$, and $Y_1$ receives colour $\alpha_3$. If $Y$ has no 2-vertex components, then we may change the colouring of $Y\setminus Y_1$ to $\alpha_3$, and colour $X\setminus X_1$ with colours $\alpha_1,\alpha_2$. So we may assume that $Y$ has a 2-vertex component $Y_2$, and $Y_2$ is anti-complete to $X_1$ since $X\cup Y$ is $K_4$-free. Since $N(Y)\setminus X$ does not use $\alpha_3$, we may use $\alpha_3$ to colour a vertex in $Y_2$. Then one of $\alpha_1$ and $\alpha_2$, say $\alpha_1$, is not used in $Y_2$. Then we may change the colouring of $Y\setminus (Y_1\cup Y_2)$ such that each $p$-vertex component of $Y\setminus (Y_1\cup Y_2)$ uses the same colours as $Y_p$, and we may colour each $q$-vertex component of $X\setminus X_1$ with $\alpha_1,\ldots,\alpha_q$. This gives a 4-colouring of $G$.

Finally assume that $X\cup Y$ is $K_3$-free. Then each component of $X$ or $Y$ has only one vertex. Let $xy$ be an edge in $X\cup Y$ with $x\in X$ and $y\in Y$. We may give a 4-colouring of $G-(X\cup Y\setminus\{x,y\})$, then colour $X\setminus\{x\}$ with the same colour of $x$ and colour $Y\setminus\{y\}$ with the same colour of $y$. This gives a 4-colouring of $G$.
\end{proof2}

\item\label{clm:AB-num-component-2}
If $A(i)$ has only one component, then $S_1(i)$ is a clique on at most 3 vertices.

\begin{proof2}
Suppose that $A(i)$ has only one component, by \ref{clm:A-1component-clique} and since $u_i\in A(i)$, we have that $S_2(i)$ is empty. By \ref{clm:1-homo} and \ref{clm:1-private}, $S_1(i)$ is homogeneous in $G$. By \ref{clm:homo-chi}, $S_1(i)$ is a clique on at most 3 vertices.
\end{proof2}

By \ref{clm:AB-num-component} and \ref{clm:AB-num-component-2} we have the following \ref{clm:1-2+0-new} and \ref{clm:1-clique}.

\item\label{clm:1-2+0-new}
$S_1(i)$ is anti-complete to $S_2(i)$.

\item\label{clm:1-clique}
$S_1(i)$ is a clique on at most 3 vertices.

Let $F_3$-$F_6$ be given in \cref{fig:F3-F6}. By \ref{clm:2-3+0}, \ref{clm:3-clique}, \ref{clm:1-2+0-new} and \ref{clm:1-clique}, and since $Q$ is an arbitrary induced $C_5$ in $G$, we have the following \ref{clm:F3-F5-free}.

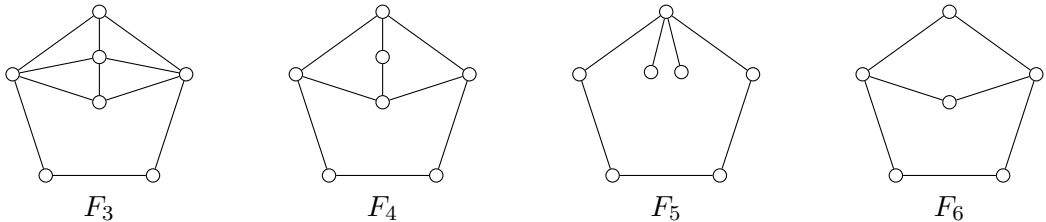
\begin{figure}[h!]
\centering
\begin{tikzpicture}[scale=0.4]
\tikzstyle{vertex}=[circle, draw, fill=white, inner sep=1pt, minimum size=5pt]

	\node at (0,-3.5) {$F_3$};
	\node[vertex](1) at (0,3) {};
	\node[vertex](2) at (-{3*cos(18)},{3*sin(18}) {};
	\node[vertex](3) at (-{3*sin(36)},-{3*cos(36}) {};
   	\node[vertex](4) at ({3*sin(36)},-{3*cos(36}) {};
	\node[vertex](5) at ({3*cos(18)},{3*sin(18}) {};
	\node[vertex](6) at (0,0) {};
	\node[vertex](7) at (0,1.5) {};

    \foreach \from/\to in {1/2,2/3,3/4,4/5,1/5,2/6,5/6,1/7,2/7,5/7,6/7}
		\draw (\from) -- (\to);
\end{tikzpicture}
\hspace{10mm}
\begin{tikzpicture}[scale=0.4]
\tikzstyle{vertex}=[circle, draw, fill=white, inner sep=1pt, minimum size=5pt]

	\node at (0,-3.5) {$F_4$};
	\node[vertex](1) at (0,3) {};
	\node[vertex](2) at (-{3*cos(18)},{3*sin(18}) {};
	\node[vertex](3) at (-{3*sin(36)},-{3*cos(36}) {};
   	\node[vertex](4) at ({3*sin(36)},-{3*cos(36}) {};
	\node[vertex](5) at ({3*cos(18)},{3*sin(18}) {};
	\node[vertex](6) at (0,0) {};
	\node[vertex](7) at (0,1.5) {};

    \foreach \from/\to in {1/2,2/3,3/4,4/5,1/5,2/6,5/6,1/7,6/7}
		\draw (\from) -- (\to);
\end{tikzpicture}
\hspace{10mm}
\begin{tikzpicture}[scale=0.4]
\tikzstyle{vertex}=[circle, draw, fill=white, inner sep=1pt, minimum size=5pt]

	\node at (0,-3.5) {$F_5$};
	\node[vertex](1) at (0,3) {};
	\node[vertex](2) at (-{3*cos(18)},{3*sin(18}) {};
	\node[vertex](3) at (-{3*sin(36)},-{3*cos(36}) {};
   	\node[vertex](4) at ({3*sin(36)},-{3*cos(36}) {};
	\node[vertex](5) at ({3*cos(18)},{3*sin(18}) {};
	\node[vertex](6) at (-0.5,1) {};
	\node[vertex](7) at (0.5,1) {};

    \foreach \from/\to in {1/2,2/3,3/4,4/5,1/5,1/6,1/7}
		\draw (\from) -- (\to);
\end{tikzpicture}
\hspace{10mm}
\begin{tikzpicture}[scale=0.4]
\tikzstyle{vertex}=[circle, draw, fill=white, inner sep=1pt, minimum size=5pt]

	\node at (0,-3.5) {$F_6$};
	\node[vertex](1) at (0,3) {};
	\node[vertex](2) at (-{3*cos(18)},{3*sin(18}) {};
	\node[vertex](3) at (-{3*sin(36)},-{3*cos(36}) {};
   	\node[vertex](4) at ({3*sin(36)},-{3*cos(36}) {};
	\node[vertex](5) at ({3*cos(18)},{3*sin(18}) {};
	\node[vertex](6) at (0,0) {};

    \foreach \from/\to in {1/2,2/3,3/4,4/5,1/5,2/6,5/6}
		\draw (\from) -- (\to);
\end{tikzpicture}
\caption{The graphs $F_3$-$F_6$.}\label{fig:F3-F6}
\end{figure}

\item\label{clm:F3-F5-free}
$G$ is $(F_3,F_4,F_5)$-free.

\item\label{clm:F6-free}
We may assume that $G$ is $F_6$-free.

\begin{proof2}
We run the program for 5-vertex-critical ($P_6$,bull,$F_2$,$F_3$,$F_4$,$F_5$)-free graphs that contain an induced $F_6$. The program terminates and outputs 25 graphs.
\end{proof2}

\end{enumerate}

By \ref{clm:S5-empty} and \ref{clm:F6-free}, $S_2$ and $S_5$ are empty. By \ref{clm:0-134} and since $G$ is connected by \cref{lem:critical}, $S_0$ is empty. By \ref{clm:4-size}, $|S_4(i)|\leq 2$. By \ref{clm:3-clique}, $|S_3(i)|\leq 2$. By \ref{clm:1-clique}, $|S_1(i)|\leq 3$. So $|V(G)|$ is bounded.
\end{proof}

Now we prove the main theorem. The main idea is that every 5-vertex-critical ($P_6$,bull)-free graph can be obtained from a 5-vertex-critical ($P_6$,bull)-free graph with no homogeneous $C_5$ by a bounded number of operations of substituting $C_5$ for a homogeneous $K_3$.

\begin{theorem}\label{thm:main}
There are finitely many 5-vertex-critical ($P_6$,bull)-free graphs.
\end{theorem}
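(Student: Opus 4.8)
The plan is to reduce the general case to \cref{thm:no-homo-C5-finite} by showing that every 5-vertex-critical $(P_6,\text{bull})$-free graph $G$ is obtained from one with no homogeneous $C_5$ by a bounded number of substitutions, each replacing a homogeneous $K_3$ by a $C_5$. The starting observation is that a homogeneous $C_5$ and a homogeneous $K_3$ are interchangeable in the critical setting: both induce 3-chromatic 3-vertex-critical subgraphs, so by \cref{lem:substitute} substituting one for the other preserves 5-vertex-criticality and, crucially, preserves $(P_6,\text{bull})$-freeness. Indeed, a $C_5$ substituted into a homogeneous set cannot create a new induced $P_6$ or bull that was not already detectable, because any induced $P_6$ or bull meets a homogeneous set in at most two vertices (the forbidden graphs are connected and small), and a $K_3$ can realise any adjacency pattern on two vertices that a $C_5$ can. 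This lets me pass freely between the two.

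Next I would set up the reduction itself. Starting from an arbitrary 5-vertex-critical $(P_6,\text{bull})$-free graph $G$, I repeatedly locate a homogeneous $C_5$, say on vertex set $S$, and replace $G[S]$ by a triangle $K_3$; by \cref{lem:substitute} and the freeness argument above, the resulting graph $G'$ is again 5-vertex-critical and $(P_6,\text{bull})$-free, and $|V(G')| = |V(G)| - 2$. Iterating until no homogeneous $C_5$ remains yields a graph $G^\ast$ with no homogeneous $C_5$, which by \cref{thm:no-homo-C5-finite} lies in a finite list $\mathcal{L}^\ast$ with $|V(G^\ast)| \le N$ for some absolute constant $N$. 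Running the process backwards, $G$ is recovered from $G^\ast$ by a sequence of substitutions of $C_5$ for a homogeneous $K_3$. The key point is that this process can only be applied a bounded number of times, so that $|V(G)|$ stays bounded and hence only finitely many $G$ arise.

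To bound the number of substitutions, I would argue that each substitution of a $C_5$ for a homogeneous $K_3$ increases the vertex count by exactly $2$, so it suffices to bound $|V(G)|$ directly. The clean way to do this is to show that the \emph{maximal} homogeneous sets of a 5-vertex-critical $(P_6,\text{bull})$-free graph are tightly controlled: by \ref{clm:homo-chi} (applied in the no-homogeneous-$C_5$ world, but transportable here) every proper homogeneous set is either a clique on at most three vertices or a homogeneous $C_5$. Since homogeneous sets that are cliques cannot be expanded by the substitution (we only substitute $C_5$'s), the number of disjoint homogeneous-$C_5$ blocks in $G$ is at most the number of vertices of $G^\ast$, which is at most $N$. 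Replacing each such $K_3$-slot in $G^\ast$ by a $C_5$ adds at most $2$ vertices per slot, giving $|V(G)| \le N + 2N = 3N$, a bound independent of $G$. It follows that there are only finitely many 5-vertex-critical $(P_6,\text{bull})$-free graphs.

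The main obstacle, and the step that needs the most care, is verifying that substituting a $C_5$ for a homogeneous $K_3$ genuinely preserves $(P_6,\text{bull})$-freeness and that the homogeneous structure is nested cleanly enough for the counting to go through. One must check that no induced $P_6$ or bull is created across the boundary of the substituted set: since the bull and $P_6$ are connected and a homogeneous set $S$ interacts with the rest of $G$ uniformly, any forbidden induced subgraph using $k \ge 2$ vertices of $S$ would force those $k$ vertices to induce one of a small number of configurations, and one verifies that whatever $C_5$ contributes can be matched by $K_3$ (and conversely). The subtlety is that a $C_5$ can contribute an induced $P_3$ or a non-edge pattern that a $K_3$ cannot, so I would argue that any forbidden subgraph meets $S$ in at most two vertices, reducing the check to adjacency/non-adjacency patterns on at most two vertices, which $K_3$ and $C_5$ realise identically up to the homogeneous interface. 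Establishing this ``at most two vertices'' claim rigorously for both the $P_6$ and the bull is where the real work lies.
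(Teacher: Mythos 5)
Your overall strategy coincides with the paper's: use \cref{lem:substitute} to pass between homogeneous $C_5$'s and homogeneous $K_3$'s, reduce to \cref{thm:no-homo-C5-finite}, and bound the number of substitutions. However, there are two genuine gaps, and they sit exactly at the two places where the real content of the proof lies. The first concerns preservation of $(P_6,\text{bull})$-freeness. You base it on the claim that an induced $P_6$ or bull meets a homogeneous set $S$ in at most two vertices, together with the assertion that ``a $K_3$ can realise any adjacency pattern on two vertices that a $C_5$ can.'' The latter is false (two vertices of a $K_3$ are always adjacent, while a $C_5$ contains non-edges), as you yourself concede in your final paragraph, and the two-vertex threshold is both unproven and, by your own admission, insufficient. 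The correct statement is stronger and cleaner: $P_6$ and the bull have no nontrivial homogeneous sets, so for any $F$ inducing one of them, $F\cap S$ is a homogeneous set of $G[F]$ and hence $|F\cap S|\le 1$ (the case $F\subseteq S$ is impossible since neither $C_5$ nor $K_3$ contains an induced $P_6$ or bull). With $|F\cap S|\le 1$ the check is immediate: the single vertex of $F\cap S$ can be swapped for any vertex of the substituted set, whose neighbourhood outside $S$ is identical. Your proposal explicitly defers this step (``where the real work lies''), so it is not actually done, and the version you propose to prove would not suffice anyway.

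The second gap is more serious and concerns the counting. You assert that ``the number of disjoint homogeneous-$C_5$ blocks in $G$ is at most the number of vertices of $G^\ast$'' because ``homogeneous sets that are cliques cannot be expanded by the substitution.'' This is not an argument. The danger is precisely that substituting a $C_5$ for a homogeneous $K_3$ (the reverse direction, which is how an arbitrary $G$ is rebuilt from $G^\ast$) might create \emph{new} homogeneous $K_3$'s; if it could, then starting from a graph on at most $n$ vertices one could a priori perform unboundedly many reverse substitutions, and $|V(G)|$ would not be bounded. The paper's proof devotes its key paragraph to excluding exactly this: any $K_3$ of $G$ meeting the inserted $C_5$ in one or two vertices has a vertex of that $C_5$ mixed on it, and any homogeneous $K_3$ of $G$ avoiding the $C_5$ was already a homogeneous $K_3$ of $G'$; meanwhile the substituted $K_3$ itself disappears. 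Hence each substitution strictly decreases the number of homogeneous-$K_3$ occurrences, so the number of substitutions starting from $G^\ast$ is at most $m$ (the number of such occurrences in $G^\ast$), giving the bound $n+2m$. Your counting needs this lemma, or the symmetric statement that substituting $K_3$ for a homogeneous $C_5$ creates no new homogeneous $C_5$'s; without one of them, the correspondence between your ``blocks'' in $G$ and ``slots'' in $G^\ast$ is unjustified. Both gaps are fixable, with precisely the arguments the paper gives, but as written the proposal omits them.
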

\begin{proof}
Let $G$ be a 5-vertex-critical ($P_6$,bull)-free graph containing a homogenous $C_5$, which we call $S$. Let $G'$ be the graph obtained from $G$ by substituting $S'$ for $S$, where $S'=K_3$. By \cref{lem:substitute}, $G'$ is 5-vertex-critical.

Suppose that $F\subseteq V(G')$ induces a $P_6$ or bull. Since $S'$ is homogeneous, and the graphs $P_6$ and bull have no nontrival homogeneous sets, we have that $|F\cap S'|\leq 1$. If $F \subseteq V(G'-S')=V(G-S)$, then $G$ has an induced $P_6$ or bull, a contradiction. Let $\{v'\}=F\cap S'$. Let $v\in S$. Since $N_{G'}(v')=N_G(v)$, $(F\setminus \{v'\})\cup \{v\}$ must be isomorphic to $F$, but then $G$ has an induced $P_6$ or bull, a contradiction. So $G'$ is ($P_6$,bull)-free. By the same argument, the reverse operation (i.e., substituting $C_5$ for a homogeneous $K_3$) does not create an induced $P_6$ or bull.

In the above operation, $|G'|=|G|-2$. So for any 5-vertex-critical ($P_6$,bull)-free graph $G$ with a homogeneous $C_5$, one can do the operation of substituting $K_3$ for a homogeneous $C_5$ for at most $|G|/2$ times and end with a 5-vertex-critical ($P_6$,bull)-free graph with no homogeneous $C_5$. Since the substitution operation is reversible, any 5-vertex-critical ($P_6$,bull)-free graph with a homogeneous $C_5$ can be obtained from a 5-vertex-critical ($P_6$,bull)-free graph with no homogeneous $C_5$ by a series of operations of substituting $C_5$ for a homogeneous $K_3$.

Let $G'$ be a 5-vertex-critical ($P_6$,bull)-free graph containing a homogeneous $K_3$, which we call $S'$. Let $G$ be the graph obtained from $G'$ by substituting $S$ for $S'$, where $S=C_5$. Now we prove that $G$ has fewer occurrences of a homogeneous $K_3$ than $G'$, which implies that a 5-vertex-critical ($P_6$,bull)-free graph $H$ can do the operation of substituting $C_5$ for a homogeneous $K_3$ by a bounded number of times, namely at most the number of occurrences of a homogeneous $K_3$ of $H$.

Suppose that $F\subseteq V(G)$ is a homogeneous $K_3$ in $G$ but not a homogeneous $K_3$ in $G'$. If $F\subseteq V(G-S)$, then $F\subseteq N_G(S)=N_{G'}(S')$ or $F\subseteq \overline{N_G(S)}=\overline{N_{G'}(S')}$, so $F$ is a homogeneous $K_3$ in $G'$, a contradiction. So $F\cap S\neq\emptyset$. Since $\omega(S)=2$, $|F\cap S|=1$ or 2. Let $S=\{v_1,v_2,v_3,v_4,v_5\}$ with $v_iv_{i+1}\in E(G)$, with all indices modulo 5. We may assume that $F\cap S=\{v_1\}$ or $\{v_1,v_2\}$. If $F\cap S=\{v_1,v_2\}$, then $v_3$ is mixed on $F$. If $F\cap S=\{v_1\}$, then $F\setminus S\subseteq N_G(S)$, and then $v_3$ is mixed on $F$. So all occurrences of a homogeneous $K_3$ in $G$ are also a homogeneous $K_3$ in $G'$. Since $S'$ is a homogeneous $K_3$ in $G'$ but not in $G$, $G$ has fewer occurrences of a homogeneous $K_3$ than $G'$.

By \cref{thm:no-homo-C5-finite}, we may assume that there exist integers $n$ and $m$ such that every 5-vertex-critical ($P_6$,bull)-free graph with no homogeneous $C_5$ has at most $n$ vertices, and has at most $m$ occurrences of a homogeneous $K_3$. Every 5-vertex-critical ($P_6$,bull)-free graph with a homogeneous $C_5$ can be obtained from a 5-vertex-critical ($P_6$,bull)-free graph with no homogeneous $C_5$ by a series of operations of substituting $C_5$ for a homogeneous $K_3$, and so has at most $n+2m$ vertices.
\end{proof}

\subsection*{Acknowledgements}

\noindent  Shenwei Huang is supported by the Natural Science Foundation of China (NSFC) under Grant 12171256 and 12161141006.
Jan Goedgebeur is supported by Internal Funds of KU Leuven and a grant of the Research Foundation Flanders (FWO) with grant number G0AGX24N. Jorik Jooken is supported by a Postdoctoral Fellowship of the FWO with grant number 1222524N.
We also acknowledge the support of the joint NSFC-FWO scientific mobility project with grant number 12311530678 and the support of the joint FWO-NSFC scientific mobility project with grant number VS01224N.

\end{document}